\newtheorem{theorem}{Theorem}
\newtheorem{proposition}{Proposition}[section]
\newtheorem{corollary}[proposition]{Corollary}
\newtheorem{lemma}[proposition]{Lemma}
\newtheorem*{problem}{Problem}
\theoremstyle{remark}
\newtheorem{remark}[proposition]{Remark}
\theoremstyle{definition}
\newtheorem*{acknowledgements}{Acknowledgements}
\newcommand{\R}{\mathbb{R}}
\newcommand{\C}{\mathbb{C}}
\newcommand{\N}{\mathbb{N}}
\newcommand{\E}{\mathbb E}
\begin{document}


\title{The Calder\'on problem with corrupted data}

\author{Pedro Caro}
\author{Andoni Garcia}
\address{BCAM - Basque Center for Applied mathematics}
\email{pcaro@bcamath.org}
\email{agarcia@bcamath.org}

\begin{abstract}
We consider the inverse Calder\'on problem consisting of determining the
conductivity inside a medium by electrical measurements on its surface.
Ideally, these 
measurements determine the Dirichlet-to-Neumann map and, therefore, one usually 
assumes the data to be given by such map. This situation corresponds to having 
access to infinite-precision measurements, which is totally unrealistic. In this 
paper, we study the
Calder\'on problem assuming the data to contain measurement errors and provide 
formulas to reconstruct the conductivity and its normal derivative on the 
surface. Additionally, we state the rate convergence of the 
method. Our approach is theoretical and has a stochastic flavour.
\end{abstract}

\date{\today}


\maketitle

\section{introduction}

In 1980, Calder\'on \cite{zbMATH05684831} proposed the following inverse boundary 
value problem: Let $D$ be a bounded domain in $\R^d$ ($d \geq 2$) with Lipschitz 
boundary $\partial D$, and let $\gamma$ be a real bounded measurable function in 
$D$ with a positive lower bound $\gamma_0$. Consider 
the linear map $\Lambda_\gamma : H^{1/2} (\partial D) \to H^{-1/2} 
(\partial D)$ 
defined---in a weak sense---by
\[
\Lambda_\gamma f = \gamma \partial_\nu u|_{\partial D}
\]
where $\partial_\nu = \nu \cdot \nabla$, with $\nu$ denoting the outward unit 
normal vector to $\partial D$,
and $ u \in H^1 (D) $ is the solution of the boundary value problem
\begin{equation}
	\left\{
		\begin{aligned}
		\nabla \cdot (\gamma \nabla u)  &= 0 \enspace\  \text{in} \ D, \\
		u|_{\partial D} &= f.
		\end{aligned}
	\right.
	\label{pb:BVP}
\end{equation}
In the literature, $\Lambda_\gamma$ is referred as the Dirichlet-to-Neumann map 
associated to $\gamma$ (DN map for short). The inverse 
problem is to decide whether $\gamma$ is uniquely determined by $\Lambda_\gamma$, 
and to calculate $\gamma$ in terms of~$\Lambda_\gamma$ if $\gamma$ is indeed 
determined by $\Lambda_\gamma$.

This problem originates in electrical prospecting. If $D$ represents an 
inhomogeneous conductive medium with conductivity $\gamma$, the inverse Calder\'on 
problem is to determine the conductivity $\gamma$ in $D$ by means of steady 
state electrical measurements carried out on the surface of $D$. In this physical 
situation, $f$ represents the electric potential on the surface and 
$\Lambda_\gamma f$ represents the normal component of the outgoing electric 
current density on the surface. Ideally, $\Lambda_\gamma$ is determined 
through measurements effected on $\partial D$.

Implementing the theoretical results of the Calder\'on problem presents
several non-trivial challenges. This is because theoretically one assumes to have access
to infinite-precision measurements and to infinite many pieces of
data, corresponding to knowing the whole graph of the DN map.
Neither of these assumptions are justified in practice.
On the one hand, only a finite number of measurements can be made to
obtain our data. On the other hand, the  data obtained will be corrupted
by measurement errors and so they will not even lie on the graph of the DN map.
The objective of this paper will be to address the question of
data corruption in the Calder\'on problem. For this purpose, we assume data to be 
given by points on the graph of the DN map plus an error modelled by \textit{random white noise}.
In mathematical terms, we consider a complete probability
space $(\Omega, \mathcal{H}, \mathbb{P})$, and a countable family
$\{ X_\alpha : \alpha \in \N^2 \}$
of independent complex Gaussian random variables
$X_\alpha : \omega \in \Omega \mapsto X_\alpha (\omega) \in \C $ such that
\begin{equation}
\E X_\alpha = 0, \qquad \E (X_\alpha \overline{X_\alpha}) = 1, \qquad 
\E(X_\alpha X_\alpha) = 0 \qquad \forall \alpha \in \N^2.
\label{id:definingX_alpha}
\end{equation}
We adopt the standard notation for the expectation of a random variable $X$:
\[
\E X = \int_\Omega X \,d\mathbb{P}.
\]
Then, we propose to define the noisy data for the Calder\'on problem as the bilinear form
\begin{equation}
\mathcal{N}_\gamma (f, g) = \int_{\partial D} \Lambda_\gamma f \, g \enspace + 
\sum_{\alpha \in \N^2} (f|e_{\alpha_1}) (g|e_{\alpha_2}) X_\alpha \qquad \forall 
f,g\in H^{1/2}(\partial D)
\label{id:noisy_data}
\end{equation}
where $\alpha = (\alpha_1, \alpha_2)$, $\{ e_n : n \in \N \}$ is an orthonormal 
basis of
$L^2 (\partial D)$ and $(\phi|\psi) = \int_{\partial D} \phi \overline{\psi}$.
\footnote{
The first integral on the definition of $\mathcal{N}_\gamma$ is an abuse of notation,
in fact meaning the duality pairing between $H^{1/2}(\partial D)$ and
$H^{-1/2}(\partial D)$.}  We will see,
in the corollary \ref{cor:noisy_data} below, that $\mathcal{N}_\gamma (f,g) \in 
L^2(\Omega, \mathcal{H}, \mathbb{P})$, and consequently $|\mathcal{N}_\gamma (f, g)| < 
\infty$ almost surely. Note that
\[\E \mathcal{N}_\gamma (f, g) = \int_{\partial D} \Lambda_\gamma f \, g, \]
which corresponds to saying that, with access to many independent outcomes
$\{ \mathcal{N}_\gamma (f, g){\scriptstyle( \omega_n )} : n \in \N \},$
we can filter out the noise by averaging
\[\frac{1}{N} \sum_{n=0}^{N - 1} \mathcal{N}_\gamma (f, g){\scriptstyle( \omega_n 
)} \xrightarrow[N \to \infty]{} \int_{\partial D} \Lambda_\gamma f \, g. \]
In practice, repetitions of the same measurement do not oscillate enough
to be filtered out by averaging. Therefore, our objective should avoid averaging
different realizations and show that
a single realization of $\mathcal{N}_\gamma (f, g)$ is enough to reconstruct 
$\gamma$.
\begin{problem}\label{pb:noisy} \sl Assuming $\gamma$ and $\partial D$ to be as 
smooth as needed,
 show that $\gamma$ can be calculated from~$\mathcal{N}_\gamma $
almost surely.
\end{problem}
We will see in the lemma \ref{lem:error} below that the error satisfies
\[\E \, \Big| \sum_{\alpha \in \N^2} (f|e_{\alpha_1}) (g|e_{\alpha_2}) X_\alpha 
\Big|^2 = \| f \|^2_{L^2(\partial D)} \| g \|^2_{L^2(\partial D)},\]
which means that the variance of the error depends on the inputs $f$ and $g$ used
to test the medium $D$. This model allows us to consider situations where the 
device used to obtain the boundary data decalibrates when the \textit{strength} of 
the electrical potential and the induced outgoing current increase.

The exact definition for the error has a purely theoretical motivation, and one 
could have replaced the space $L^2(\partial D)$ for other Hilbert spaces as 
$H^{1/2} (\partial D)$ or $H^{-1/2} (\partial D)$, however, the analysis carried 
out in this paper would be different. Note that in the case of $L^2$ the 
covariance operator (associated to the error) would be the identity---zeroth order 
operator, while in the cases $H^{1/2}$ and $H^{-1/2}$ would corresponds to 
operators of order $1$ and $-1$ respectively.

Saying that our error is modelled by a random white noise may seem vague and 
imprecise, but we hope it is not confusing. To clarify this comment, note that, 
given $m \in \N$, the linear map
\[W_m : g \longmapsto \sum_{n \in \N} (g|e_n) X_{(m,n)}\]
corresponds to a typical white noise. Thus, the error in our model is representing 
the mapping
\[f \longmapsto \sum_{m \in \N} (f | e_m) W_m.\]

The question of how to model the noise in inverse problems is of capital 
importance,
since infinite-precision measurements are totally unrealistic.
There seem to be two different approaches: one based on deterministic 
regularization
techniques, assuming the noise to be deterministic and small \cite{zbMATH03100841, zbMATH03227378}; and  
another
based on a statistical point of view \cite{zbMATH03214847, zbMATH03315672},
which does not need to assume smallness of the noise. See also the works \cite{zbMATH06298490, zbMATH06653178}.
Knudsen, Lassas, Mueller and 
Siltanen
\cite{zbMATH05679626} used regularization techniques to study the 
Calder\'on problem in 
dimension
$d = 2$ with noisy data. In order to carry out their deterministic 
analysis, they assumed
the noise level to be small. Our approach has a stochastic flavour with no 
restriction on the size of
the noise. In the context of the Calder\'on problem, this seems to 
be a new
approach. In this paper we show that $\gamma|_{\partial D}$ and 
$\partial_\nu \gamma|_{\partial D}$ can be reconstructed from a single 
realization of $\mathcal{N}_\gamma$.

\begin{theorem}\label{th:gamma}\sl Let $D$ be a bounded domain of $\R^d$ 
($d \geq 2$) with Lipschitz boundary $\partial D$. Consider $\gamma$ a 
Lipschitz continuous conductivity in $\overline{D}$. Then, for almost 
every $P \in  \partial D $, there exists an explicit sequence $\{ f_N : N 
\in \N \setminus\{0\} \}$ in $H^{1/2}(\partial D)$ such that
\[\lim_{N \to \infty} \mathcal{N}_\gamma (f_N, \overline{f_N}) = \gamma(P) 
\]
almost surely.
\end{theorem}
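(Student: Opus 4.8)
The plan is to split the noisy data into its deterministic mean and its fluctuation, show the mean converges to $\gamma(P)$ by classical boundary determination, and kill the fluctuation by a variance estimate. Fix a point $P\in\partial D$ at which the boundary admits a tangent hyperplane; by Rademacher's theorem this holds for almost every $P$, which is the source of the ``almost every'' in the statement. After an affine change of variables I would assume $P=0$, the outer normal is $-e_d$, and $D$ coincides locally with $\{x_d>0\}$, writing $x=(x',x_d)$ with $x'\in\R^{d-1}$. I would then record the decomposition
\[
\mathcal{N}_\gamma(f_N,\overline{f_N}) = \int_{\partial D}\Lambda_\gamma f_N\,\overline{f_N} \;+\; E_N,
\qquad E_N:=\sum_{\alpha\in\N^2}(f_N|e_{\alpha_1})(\overline{f_N}|e_{\alpha_2})\,X_\alpha,
\]
and analyse the two summands independently: the first is deterministic, the second carries all the randomness.

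For the deterministic term I would use the oscillating boundary-layer test functions concentrated at $P$. In the flattened coordinates, fix a unit tangential vector $\tau\in\R^{d-1}$ and a cutoff $\eta$, and set
\[
v_N(x',x_d) := c_N\,\eta(x)\,e^{iN x'\cdot\tau}\,e^{-N x_d},\qquad f_N := v_N\big|_{\{x_d=0\}},
\]
so that $e^{iNx'\cdot\tau}e^{-Nx_d}$ is a harmonic boundary layer decaying into $D$ at scale $1/N$. Using the energy identity $\int_{\partial D}\Lambda_\gamma f_N\,\overline{f_N}=\int_D\gamma|\nabla u_N|^2$ and the fact that $|\nabla(e^{iNx'\cdot\tau}e^{-Nx_d})|^2=2N^2e^{-2Nx_d}$ localises the energy within distance $\sim 1/N$ of $P$, a direct computation gives $\int_D\gamma|\nabla v_N|^2\approx\gamma(P)\,N\,c_N^2\int|\eta(\cdot,0)|^2$. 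I would choose the amplitude $c_N$ so that $\|f_N\|_{L^2(\partial D)}^2=N^{-1}(1+o(1))$, which forces
\[
\int_{\partial D}\Lambda_\gamma f_N\,\overline{f_N}\;\xrightarrow[N\to\infty]{}\;\gamma(P).
\]
This is exactly the classical pointwise boundary reconstruction (Kohn--Vogelius, Sylvester--Uhlmann, Brown), valid already under continuity---hence under Lipschitz regularity---of $\gamma$. Here I would either invoke it directly or re-derive the leading asymptotics by hand, checking that the subprincipal contributions are $O(\|f_N\|_{L^2}^2)=O(1/N)\to0$ and that replacing the true solution $u_N$ by the explicit boundary layer $v_N$ costs only lower-order energy.

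For the stochastic term I would invoke lemma \ref{lem:error}, which yields
\[
\E\,|E_N|^2 = \|f_N\|_{L^2(\partial D)}^2\,\|\overline{f_N}\|_{L^2(\partial D)}^2 = \|f_N\|_{L^2(\partial D)}^4 = N^{-2}(1+o(1)).
\]
The decisive point is that the \emph{same} normalisation producing the limit $\gamma(P)$ makes the noise variance summable: $\sum_N\E|E_N|^2<\infty$. Hence, for every $\varepsilon>0$, Chebyshev's inequality gives $\sum_N\mathbb{P}(|E_N|>\varepsilon)\le\varepsilon^{-2}\sum_N\E|E_N|^2<\infty$, and the Borel--Cantelli lemma yields $\mathbb{P}(|E_N|>\varepsilon\text{ infinitely often})=0$. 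Intersecting over $\varepsilon=1/k$, $k\in\N\setminus\{0\}$, gives $E_N\to0$ almost surely, and combining the two limits proves $\mathcal{N}_\gamma(f_N,\overline{f_N})\to\gamma(P)$ almost surely.

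I expect the main obstacle to be the deterministic step rather than the probabilistic one. One must build explicit test functions that simultaneously (i) produce the exact limit $\gamma(P)$ under only Lipschitz regularity---where no smooth pseudodifferential symbol calculus is available, so the asymptotics must be extracted directly from the energy functional and the bi-Lipschitz flattening---and (ii) satisfy the sharp normalisation $\|f_N\|_{L^2(\partial D)}^2\sim 1/N$ needed for summability of the variance. The probabilistic half is then essentially automatic, the key structural feature being that the white-noise covariance is of order zero (the identity on $L^2(\partial D)$), so the noise energy scales precisely like $\|f_N\|_{L^2}^4$; had the noise been modelled on $H^{1/2}$ or $H^{-1/2}$ this scaling, and thus the summability that drives the Borel--Cantelli argument, would have changed.
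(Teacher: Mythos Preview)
Your approach matches the paper's: split $\mathcal{N}_\gamma(f_N,\overline{f_N})$ into its deterministic mean and the noise $E_N$, recover $\gamma(P)$ from the mean via Brown's boundary-layer construction, and kill $E_N$ by the variance identity of Lemma~\ref{lem:error} together with Chebyshev and Borel--Cantelli (the paper packages this last step as Lemma~\ref{lem:convergence} with $\lambda_N=N^{-\theta}$, $\theta\in(0,1/2)$). Two details to tighten. First, your fixed cutoff $\eta(x)$ will not yield $\gamma(P)$ on a merely Lipschitz boundary: after flattening, the energy integral then averages $A_\gamma(x',0)$ over the whole support of $\eta$, so you recover a weighted average of $\gamma$ near $P$ rather than $\gamma(P)$. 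The paper (following Brown) uses the two-scale ansatz $\chi(Mx)\,e^{N(i\xi-e_d)\cdot x}$ with $M=N^{1/2}$, so the support shrinks and the residual error becomes $\mathcal{O}(N^{-1/2})+h(M)$ with
\[
h(M)=\Big(M^{d-1}\int_{|x'|\le M^{-1}}|\nabla\phi(x'+p')-\nabla\phi(p')|^2\,dx'\Big)^{1/2}.
\]
Second, and relatedly, the source of ``almost every $P$'' is not Rademacher differentiability but the vanishing of $h(M)$ as $M\to\infty$, i.e.\ the requirement that $p'$ be an $L^2$-Lebesgue point of $\nabla\phi$ (Proposition~\ref{prop:recon_gamma}); this holds a.e.\ and is the hypothesis the error bounds in Lemmas~\ref{lem:Br1}--\ref{lem:Br2} actually need.
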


Our theorem only establishes a reconstruction procedure for almost every 
$P$ in $\partial D$. However, in the proposition \ref{prop:recon_gamma} we 
describe the set of boundary points for which the reconstruction algorithm 
works. It is worth to point out that this description only requires local 
smoothness of $\partial D$. In fact, if the domain was $C^1$ the theorem 
would hold for every point $P \in \partial D$.

The theorem \ref{th:gamma} extends a result with ideal data due to Brown 
\cite{zbMATH01731190} for the particular case that $\gamma$ is Lipschitz---Brown's 
theorem holds for very low regular conductivities. We believe that our theorem also holds 
at the same level of regularity with no extra effort.

The rate convergence of the limit in the theorem \ref{th:gamma} is described in 
the next theorem.

\begin{theorem}\label{th:stability_gamma}\sl Let $D$ be a bounded domain 
of $\R^d$ ($d \geq 2$) with a $C^{1,\theta}$ boundary $\partial D$ for 
$0 < \theta < 1$. Consider $\gamma$ as in the theorem \ref{th:gamma}.
Then, for every $P \in \partial D$, there exist an explicit sequence $\{ 
f_N : N \in \N\setminus\{0\} \}$ in $H^{1/2}(\partial D)$ and a constant 
$C > 0$ (depending on $d$, $\partial D$, a lower bound on $\gamma_0$ and 
an upper bound for $\| \gamma \|_{C^{0,1}(\overline{D})}$) such that, for 
every $ \epsilon > 0$, we have
\[ \mathbb{P} \{ | \mathcal{N}_\gamma (f_N, \overline{f_N}) - 
\gamma(P) | \leq C N^{-\theta/(1 + \theta)} \} \geq 1 - \epsilon \qquad 
\forall \, N \geq c \epsilon^{-\frac{1+\theta}{1 - \theta}}. \]
Here $c$ only depends on $\partial D$ and $\theta$.
\end{theorem}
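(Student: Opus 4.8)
The plan is to build on the reconstruction procedure already established in Theorem \ref{th:gamma} and quantify the two error sources separately: the deterministic approximation error, coming from how well the chosen boundary data localize near $P$, and the stochastic error, coming from the white noise term in $\mathcal{N}_\gamma$. The explicit sequence $f_N$ will be the same one used in Theorem \ref{th:gamma} (boundary data concentrating at $P$ at some scale $h = h(N)$, typically of the form of a highly oscillatory or sharply peaked function whose $L^2(\partial D)$-norm is normalized), except that now the concentration scale $h$ is tied to $N$ in a way to be optimized at the end. Writing
\[
\mathcal{N}_\gamma(f_N, \overline{f_N}) - \gamma(P) = \Big( \int_{\partial D} \Lambda_\gamma f_N \, \overline{f_N} - \gamma(P) \Big) + \sum_{\alpha \in \N^2} (f_N|e_{\alpha_1})(\overline{f_N}|e_{\alpha_2}) X_\alpha,
\]
I would treat the first parenthesis by the deterministic analysis underlying Theorem \ref{th:gamma} (or Brown's argument), which, under $C^{1,\theta}$ regularity of $\partial D$ and Lipschitz $\gamma$, should give a bound $C h^\theta$ for the bias: the $\theta$-H\"older modulus of the geometry and the Lipschitz modulus of $\gamma$ are exactly what control how close the localized energy is to $\gamma(P)$.

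For the stochastic term, the key observation is that it is a mean-zero random variable whose variance is, by the computation in Lemma \ref{lem:error}, controlled by $\|f_N\|_{L^2(\partial D)}^2 \|\overline{f_N}\|_{L^2(\partial D)}^2$. With the normalization $\|f_N\|_{L^2(\partial D)} = 1$ this variance is $1$, which is too large; so the actual $f_N$ used in Theorem \ref{th:gamma} must carry a small $L^2$-mass — indeed the localizing data live essentially at scale $h$ and one expects $\|f_N\|_{L^2(\partial D)}^2 \sim h^{\beta}$ for a suitable power $\beta > 0$ (in Brown's-type constructions the relevant normalization forces this), so that the variance of the noise is of order $h^{2\beta}$ or similar. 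I would then invoke Chebyshev's inequality: for the mean-zero noise term $E_N$ with $\E|E_N|^2 = \sigma_N^2 = \|f_N\|_{L^2}^4$,
\[
\mathbb{P}\{ |E_N| > t \} \leq \sigma_N^2 / t^2.
\]
Choosing $t \sim h^\theta$ (to match the deterministic error) forces $\sigma_N / h^\theta$ to be small; identifying $h$ with a negative power of $N$, $h = N^{-1/(1+\theta)}$, makes $h^\theta = N^{-\theta/(1+\theta)}$, which is exactly the rate in the statement, and the constraint $\sigma_N^2 / t^2 \leq \epsilon$ becomes $N \geq c\,\epsilon^{-(1+\theta)/(1-\theta)}$ after bookkeeping the powers — the exponent $(1-\theta)$ in the denominator should drop out precisely from comparing $\sigma_N \sim h^{1}$-type scaling against $h^\theta$, i.e. from the gap $1 - \theta$ between the noise decay and the bias decay.

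The main obstacle is getting the deterministic bias estimate at the sharp H\"older rate $h^\theta$ while simultaneously keeping precise track of $\|f_N\|_{L^2(\partial D)}$, since the two must be balanced against each other and a loss in either one degrades the final exponents. Concretely, I expect the delicate part to be the boundary-layer analysis near $P$: one flattens $\partial D$ locally, uses that the flattening map is $C^{1,\theta}$, estimates $\int \Lambda_\gamma f_N \, \overline{f_N}$ against the corresponding half-space quantity, and must show the error of this comparison is $O(h^\theta)$ uniformly — this is where $C^{1,\theta}$ (rather than merely $C^1$) is essential and where the constant $C$ acquires its dependence on $d$, $\partial D$, $\gamma_0$ and $\|\gamma\|_{C^{0,1}(\overline{D})}$. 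Once that estimate is in hand with explicit constants, the probabilistic part is a one-line Chebyshev argument, and the optimization of $h$ in terms of $N$ and then $N$ in terms of $\epsilon$ is elementary algebra with the exponents; I would state the choice $h = N^{-1/(1+\theta)}$ up front and verify both inequalities in the conclusion follow.
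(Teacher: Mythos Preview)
Your overall strategy is correct and matches the paper's: split $\mathcal{N}_\gamma(f_N,\overline{f_N}) - \gamma(P)$ into the deterministic bias (controlled by Brown's lemmas, giving $\mathcal{O}(M^{-\theta})$ once $\partial D \in C^{1,\theta}$) plus the noise term with variance $\|f_N\|_{L^2}^4 \lesssim N^{-2}$, then pick the localization scale $M = N^{1/(1+\theta)}$ (your $h = M^{-1}$) so the bias is $\mathcal{O}(N^{-\theta/(1+\theta)})$. The sequence and the deterministic analysis are exactly the paper's.

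The difference is in the probabilistic step. The paper does \emph{not} use a single Chebyshev bound; it invokes the second part of Lemma~\ref{lem:convergence}, which is a Borel--Cantelli/summability argument: one chooses $N_0$ so that $\sum_{N\geq N_0} \lambda_N^{-2}\,\E|E_N|^2 \leq \epsilon$, and this tail sum $\sum_{N\geq N_0} N^{-2/(1+\theta)} \sim N_0^{-(1-\theta)/(1+\theta)}$ is what produces the threshold $N_0 \gtrsim \epsilon^{-(1+\theta)/(1-\theta)}$. This buys a slightly stronger conclusion than the theorem states --- a single event of probability $\geq 1-\epsilon$ on which the bound holds for \emph{all} $N\geq N_0$ simultaneously.

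Your Chebyshev route is actually simpler and, for the theorem as stated (a per-$N$ probability bound), gives a \emph{better} threshold: $\sigma_N^2/\lambda_N^2 \lesssim N^{-2/(1+\theta)} \leq \epsilon$ yields $N \geq c\,\epsilon^{-(1+\theta)/2}$, not $\epsilon^{-(1+\theta)/(1-\theta)}$. So your bookkeeping claim that the exponent $(1-\theta)$ ``drops out'' of Chebyshev is wrong --- the $(1-\theta)$ in the paper's threshold comes precisely from the summation, not from a single Markov/Chebyshev inequality. If you carry out your own arithmetic carefully you will prove a stronger statement than the one you are asked to prove; just be aware that your heuristic ``$\sigma_N\sim h$ vs.\ $h^\theta$, gap $1-\theta$'' does not match what Chebyshev actually gives here (in fact $\sigma_N \sim N^{-1} = h^{1+\theta}$, not $h$).
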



In this theorem, the regularity of $\gamma$ could have been lowered to $C^{0,
\theta}$ with no extra effort and no loss on the rate of convergence. However, in 
order to get a rate of convergence of the type stated in our theorem, the 
method requires H\"older continuity for the 
conductivity and the first derivatives of the functions describing locally the boundary of $D$. We 
believe that this a priori regularity is also required when having ideal data. 
However, the stability of the problem for ideal data is Lipschitz under the 
assumptions of Brown's theorem \cite{0266-5611-32-11-115015}. This seems to tell 
that even if a reconstruction 
method provides Lipschitz stability for the problem, the rate of convergence of the 
same method could be worse or require extra assumptions.

In the next theorem we provide a formula to reconstruct the normal derivative of the 
conductivity at the boundary, once we know the conductivity at the 
boundary. For this, we will use a reference medium with an homogeneous 
conductivity identically one. Its corresponding DN map will be denoted by 
$\Lambda$.

\begin{theorem}\label{th:part_gamma}\sl Let $D$ be a bounded domain of $\R^d$ 
($d \geq 2$) with $C^{1,1}$ boundary $\partial D$ and assume $\gamma \in C^{1,1}
(\overline{D})$. Then, for
every $P \in  \partial D $, there exists an explicit family $\{ f_t : t 
\geq 1 \}$ in $H^{1/2}(\partial D)$ such that
\[\lim_{N \to  \infty} \frac{1}{T_N}\int_{T_N}^{2T_N} \Big[ \mathcal{N}_\gamma (f_{t^2}, \overline{f_{t^2}}/\gamma) - 
\int_{\partial D} \Lambda f_{t^2} \,\overline{f_{t^2}} \, \Big] \, dt = \frac{ 
\partial_{\nu_P} \gamma(P) + i\tau_P \cdot \nabla\gamma(P)}{\gamma(P)} \]
almost surely where $N \in \N \setminus \{0\}$ and $T_N = N^{3+3\theta/2}$ with $\theta \in (0, 1)$. Here $\nu_P$ is the outward unit normal vector to $\partial D$ at $P$ and $\tau_P$ denotes any unitary tangential vector at $P$.
\end{theorem}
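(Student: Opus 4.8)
The plan is to adapt the boundary-determination technique of Brown (and its refinements) for recovering the normal derivative of the conductivity to the noisy setting, using the time-averaging device already seen in the theorem's statement to kill the random fluctuations. First I would fix $P \in \partial D$ and flatten the boundary near $P$, choosing coordinates so that $\nu_P = -e_d$ and $P$ is the origin; by the $C^{1,1}$ assumption the boundary is locally a graph with Lipschitz gradient, which is exactly the regularity needed to control the remainder terms below. The test functions $f_t$ will be the boundary traces of (approximately) complex-exponential solutions concentrating at $P$: something like $f_t(x) = t^{(d-1)/4}\,\eta(x)\,e^{t(\zeta\cdot x)}$ where $\zeta = \zeta(P)$ is a complex vector with $\zeta\cdot\zeta = 0$, $\operatorname{Re}\zeta$ pointing into $D$ and $\operatorname{Im}\zeta = \tau_P$, and $\eta$ a fixed cutoff supported near $P$; the normalization is chosen so that $\|f_t\|_{L^2(\partial D)} \sim 1$ (this is the scaling that makes the error term $O(1)$ rather than blowing up). One then computes, for the ideal data, the classical identity
\begin{equation}
\int_{\partial D} (\Lambda_\gamma - \Lambda) f_{t} \,\overline{f_{t}}/\gamma \;\longrightarrow\; \frac{\partial_{\nu_P}\gamma(P) + i\tau_P\cdot\nabla\gamma(P)}{\gamma(P)}
\label{eq:ideal-limit}
\end{equation}
as $t \to \infty$, which is where the $C^{1,1}$ hypothesis on $\gamma$ enters: one integrates by parts, uses that $\gamma$ is differentiable at the boundary, and estimates the error coming from the curvature of $\partial D$ and from the fact that $e^{t\zeta\cdot x}$ is only an approximate harmonic function on the curved domain. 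I expect the deterministic rate here to be a negative power of $t$, governed by the Hölder exponent of $\nabla\gamma$ and of the boundary parametrization; with $C^{1,1}$ one should get something like $O(t^{-1})$ up to logs, which is what dictates the exponent $3+3\theta/2$ in $T_N$.

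Next I would handle the stochastic part. Writing $\mathcal{N}_\gamma(f,g) = \int_{\partial D}\Lambda_\gamma f\,g + E(f,g)$ with $E(f,g) = \sum_{\alpha}(f|e_{\alpha_1})(g|e_{\alpha_2})X_\alpha$, the error contribution to the quantity in the theorem is $E(f_{t^2}, \overline{f_{t^2}}/\gamma)$. By the lemma~\ref{lem:error}-type identity its variance is $\|f_{t^2}\|_{L^2}^2\,\|f_{t^2}/\gamma\|_{L^2}^2$, which by the normalization and the lower bound $\gamma \geq \gamma_0$ is $O(1)$ uniformly in $t$; so a single realization does not tend to zero, and averaging is genuinely needed. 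The key computation is the covariance of $E(f_{s^2},\cdot)$ and $E(f_{t^2},\cdot)$ for $s \neq t$: since the $X_\alpha$ are independent with the stated second moments, $\E\big[E(f_{s^2},\overline{f_{s^2}}/\gamma)\,\overline{E(f_{t^2},\overline{f_{t^2}}/\gamma)}\big] = (f_{s^2}|f_{t^2})\,\overline{(f_{s^2}/\gamma \,|\, f_{t^2}/\gamma)}$, and because the exponentials $e^{s^2\zeta\cdot x}$ and $e^{t^2\zeta\cdot x}$ concentrate at $P$ at rate $s^2$ resp.\ $t^2$, these overlap integrals decay like a negative power of $\max(s,t)/\min(s,t)$ — effectively the normalized error process is \emph{weakly correlated across scales}. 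Then one sets $Y_N = \frac{1}{T_N}\int_{T_N}^{2T_N} E(f_{t^2},\overline{f_{t^2}}/\gamma)\,dt$, computes $\E|Y_N|^2$ by plugging in the covariance and integrating in $s,t$ over $[T_N,2T_N]^2$; the near-diagonal contribution gives $O(1/T_N)$ and the off-diagonal part is even smaller, so $\E|Y_N|^2 \to 0$ at a polynomial rate in $N$. A Chebyshev bound plus the Borel–Cantelli lemma along the sequence $T_N = N^{3+3\theta/2}$ (the exponent chosen large enough that $\sum_N \E|Y_N|^2 < \infty$ while still small enough that the deterministic error $O(T_N^{-1})$, evaluated along this sequence, also sums) then yields $Y_N \to 0$ almost surely.

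Finally I would assemble the two pieces: the full averaged quantity in the theorem is
\begin{equation}
\frac{1}{T_N}\int_{T_N}^{2T_N}\!\Big[\textstyle\int_{\partial D}(\Lambda_\gamma - \Lambda) f_{t^2}\,\overline{f_{t^2}}/\gamma\Big]dt \;+\; Y_N,
\label{eq:split}
\end{equation}
where the first term converges deterministically to the right-hand side of \eqref{eq:ideal-limit} by \eqref{eq:ideal-limit} and dominated convergence (the integrand is uniformly bounded and converges pointwise in $t$), and $Y_N \to 0$ almost surely by the previous paragraph; note one must also check that $\int_{\partial D}\Lambda f_{t^2}\,\overline{f_{t^2}}/\gamma$ — with the weight $1/\gamma$ — differs from $\int_{\partial D}\Lambda f_{t^2}\,\overline{f_{t^2}}$ appearing in the statement only by a term that is absorbed into the limit, using $\gamma(P)$ as the leading value of $1/\gamma$ near $P$; this is a routine Hölder-continuity estimate. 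The main obstacle, I expect, is \emph{not} the probabilistic bookkeeping but the deterministic asymptotic \eqref{eq:ideal-limit} with an explicit, quantitative rate on a Lipschitz-graph domain: one needs sharp control of how well the planar exponentials solve the conductivity equation near a merely $C^{1,1}$ boundary and how the $\gamma$-weighting interacts with the boundary layer, and it is precisely the exponent obtained there that forces the particular choice $T_N = N^{3+3\theta/2}$ so that the deterministic and stochastic errors can be balanced and summed simultaneously.
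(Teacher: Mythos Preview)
Your overall architecture matches the paper's: split the averaged quantity into a deterministic part (handled by the Brown--Salo asymptotics for $\int(\gamma^{-1}\Lambda_\gamma-\Lambda)f_{t^2}\,\overline{f_{t^2}}$) and the averaged noise $Y_N$, bound $\E|Y_N|^2$ via the covariance $(f_{s^2}|f_{t^2})(\overline{f_{s^2}}/\gamma|\overline{f_{t^2}}/\gamma)$, and conclude by Borel--Cantelli along $T_N$. The gap is in your mechanism for the covariance decay. You say the overlap integrals decay like a negative power of $\max(s,t)/\min(s,t)$ because the exponentials ``concentrate at different rates''; but on the averaging window $[T_N,2T_N]$ that ratio lies in $[1/2,2]$, so concentration yields \emph{no} decay at all, and your off-diagonal bound collapses --- with only that estimate, $\E|Y_N|^2$ stays of order one and the argument fails. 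The correct mechanism is \emph{oscillation}: on the flattened boundary the product $f_{t^2}\overline{f_{s^2}}$ carries the phase $e^{i(t^2-s^2)\xi'\cdot x'}$, and a single integration by parts in $x'$ gives
\[
|(f_{t^2}|f_{s^2})|+|(f_{s^2}/\gamma|f_{t^2}/\gamma)| \ \lesssim\ \frac{t+s+1}{|t^2-s^2|}\ \sim\ \frac{1}{|t-s|}\quad\text{on }[T,2T]^2.
\]
This is why the family is indexed by $t^2$ rather than $t$: it makes the frequency separation on the boundary grow linearly in $t$. One then splits $[T,2T]^2$ into a diagonal strip $|t-s|<S$ (contribution $\lesssim S/T$, using only $|(f_{t^2}|f_{s^2})|\lesssim 1$) and its complement (contribution $\lesssim S^{-2}$), optimizes at $S=T^{1/3}$, and gets $\E|Y_N|^2\lesssim T_N^{-2/3}$. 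The exponent $T_N=N^{3+3\theta/2}$ is chosen exactly so that $\sum_N \lambda_N^{-2}\E|Y_N|^2<\infty$ with $\lambda_N=N^{-\theta}$; the deterministic error $\mathcal{O}(T_N^{-1})$ is then already far smaller, so there is no ``balancing'' of the two.

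Two smaller points. First, with a \emph{fixed} cutoff $\eta$ your normalization $t^{(d-1)/4}$ does not give $\|f_t\|_{L^2(\partial D)}\sim 1$: on the flattened boundary $\mathrm{Re}\,\zeta\cdot x$ vanishes, so $|f_t|^2=t^{(d-1)/2}\eta^2$ blows up. The paper's $f_N$ uses a scaled cutoff $\chi(N^{1/2}x)$, which both fixes the $L^2$ normalization and produces the factor $t+s$ in the numerator above when the derivative hits it. Second, your worry about reconciling $\int\Lambda f\,\overline f/\gamma$ with $\int\Lambda f\,\overline f$ is unnecessary: the integral identity used is $\int_{\partial D}(\gamma^{-1}\Lambda_\gamma-\Lambda)f\,g=-\int_D\gamma^{-1}\nabla\gamma\cdot\nabla u\,v$, which already pairs $\gamma^{-1}\Lambda_\gamma$ with the unweighted $\Lambda$, so the statement is exactly as written.
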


Brown and Salo \cite{zbMATH05045048} proved a similar result to ours for the
steady state heat equation with convection assuming access to infinite-precision data. Their result could
be rewritten, in the case of ideal data, for the conductivity equation 
assuming $\gamma$ and $\partial D$ to be $C^1$. Our theorem \ref{th:part_gamma} 
extends this for the particular case where $\gamma$ and $\partial D$ are 
$C^{1,1}$. In this case, our method fails for less regular assumptions on the boundary $\partial D$---see the 
lemma \ref{lem:filtering} below---however, one may expect our assumption on the regularity of the conductivity to be relaxed. In the appendix of \cite{0266-5611-32-11-115015}, 
Brown in collaboration with Garc\'ia and Zhang proved that the normal derivative of the 
conductivity on the boundary can be recovered from ideal data assuming the boundary to be 
Lipschitz. This approach does not seem to be so convenient for our case since the formula 
is non-linear with respect to the data (see the theorem 7 in 
\cite{0266-5611-32-11-115015}) and this may cause difficulties when filtering 
out the noise.

Our last theorem describes the rate of convergence of the limit in the previous 
theorem.

\begin{theorem}\label{th:stability_part_gamma}\sl Let $D$ be a bounded domain of 
$\R^d$ 
($d \geq 2$) with $C^{1,1}$ boundary $\partial D$ and assume $\gamma \in C^{1,1}
(\overline{D})$. Consider $P\in\partial D$ and $\{ f_t : t \geq 1 \}$ the family of the 
theorem \ref{th:part_gamma}. For every $N \in \N\setminus \{ 0 \}$, set 
\[Y_N = \frac{1}{T_N}\int_{T_N}^{2T_N} \Big[ 
\mathcal{N}_\gamma (f_{t^2}, \overline{f_{t^2}}/\gamma) - \int_{\partial D} 
\Lambda f_{t^2} \,\overline{f_{t^2}} \, \Big] \, dt \]
with $T_N = N^{3+3\theta/2}$ for $\theta \in (0, 1)$.
Then, there exists a constant 
$C > 0$ (depending on $d$, $\partial D$, a lower bound on $\gamma_0$ and 
an upper bound for $\| \gamma \|_{C^{1,1}(\overline{D})}$) such that, for 
every $ \epsilon > 0$, we have
\[\mathbb{P} \Big\{ \big| Y_N - \frac{ \partial_{\nu_P} \gamma(P) + i\tau_P \cdot 
\nabla\gamma(P)}{\gamma(P)} \big| \leq C N^{-\theta} \Big\} \geq 1 - \epsilon 
\qquad \forall \, N > c \epsilon^{-\frac{1}{1 - \theta}}. 
\]
Here $c$ depends on $\theta$, $d$, $\partial D$, a lower bound for $\gamma_0$ and an upper bound for $\| \gamma \|_{C^{1,1}(\overline{D})}$.
\end{theorem}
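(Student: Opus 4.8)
The plan is to decompose $Y_N$ into a deterministic term plus a centred Gaussian term and to estimate the two separately; the length $T_N=N^{3+3\theta/2}$ of the averaging window is dictated by balancing these two sources of error. Using the definition \eqref{id:noisy_data} of $\mathcal{N}_\gamma$, write $Y_N=D_N+Z_N$ with
\[
D_N=\frac1{T_N}\int_{T_N}^{2T_N}\Big[\int_{\partial D}\Lambda_\gamma f_{t^2}\,\overline{f_{t^2}}/\gamma-\int_{\partial D}\Lambda f_{t^2}\,\overline{f_{t^2}}\,\Big]\,dt ,
\]
\[
Z_N=\frac1{T_N}\int_{T_N}^{2T_N}\Big(\sum_{\alpha\in\N^2}(f_{t^2}\,|\,e_{\alpha_1})\,(\overline{f_{t^2}}/\gamma\,|\,e_{\alpha_2})\,X_\alpha\Big)\,dt .
\]
The splitting is legitimate and $Z_N\in L^2(\Omega,\mathcal H,\mathbb P)$: by the corollary \ref{cor:noisy_data} the inner series defines, for each $t$, an element of $L^2(\Omega)$ depending measurably on $t$, and once the variance bound below is available, Fubini's theorem makes $t\mapsto Z_N(t,\cdot)$ Bochner integrable on the finite interval $[T_N,2T_N]$.

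For the deterministic term, the theorem \ref{th:part_gamma} already gives $D_N\to L:=(\partial_{\nu_P}\gamma(P)+i\tau_P\cdot\nabla\gamma(P))/\gamma(P)$; what is needed here is a rate. Reading the proof of the theorem \ref{th:part_gamma} quantitatively, the integrand at height $t$ equals $L$ modulo (i) a genuine lower-order contribution governed by the first-order Taylor expansions of $\gamma$ and of a local graph representation of $\partial D$ at $P$ --- whose errors are quadratically small in the distance to $P$ by the $C^{1,1}$ hypothesis --- evaluated at the scale $t^{-1}$ on which $f_{t^2}$ concentrates, and (ii) a remainder oscillating in $t$, which averages to $O(T_N^{-1})$ over $[T_N,2T_N]$. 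Hence $|D_N-L|\lesssim T_N^{-\rho_0}$ for a fixed $\rho_0>0$, and with $T_N=N^{3+3\theta/2}$ and $\theta<1$ this yields $|D_N-L|\le C_1 N^{-\theta}$ with $C_1$ of the asserted dependence.

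The random term $Z_N$ is a complex Gaussian with $\E Z_N=0$ and, since the coefficients of the $X_\alpha$ are deterministic and $\E(X_\alpha X_\beta)=0$, also $\E Z_N^2=0$. Its variance is computed exactly as in the lemma \ref{lem:error}: expanding $\E|Z_N|^2$, using $\E(X_\alpha\overline{X_\beta})=\delta_{\alpha\beta}$, and carrying out the sum over $\alpha=(\alpha_1,\alpha_2)$ by Parseval's identity in $L^2(\partial D)$ in each coordinate separately, one gets
\[
\E|Z_N|^2=\frac1{T_N^2}\int_{T_N}^{2T_N}\!\!\int_{T_N}^{2T_N}(f_{t^2}\,|\,f_{s^2})\,(\overline{f_{t^2}}/\gamma\,|\,\overline{f_{s^2}}/\gamma)\,dt\,ds .
\]
The filtering lemma \ref{lem:filtering} supplies precisely the decay of $(f_{t^2}|f_{s^2})$ in $|t-s|$ --- set against the growth in $t$ of $\|f_{t^2}\|_{L^2(\partial D)}$ --- that is needed to bound this double integral, and it is exactly here that the $C^{1,1}$ regularity of $\partial D$ enters; inserting it, together with the companion estimate for $(\overline{f_{t^2}}/\gamma\,|\,\overline{f_{s^2}}/\gamma)$, which differs only by a smooth, bounded, non-vanishing factor, yields $\E|Z_N|^2\lesssim T_N^{-\kappa}$ for a fixed $\kappa>0$, i.e. $\E|Z_N|^2\le C_2\,N^{-(1+\theta)}$ once $T_N=N^{3+3\theta/2}$.

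We conclude with Chebyshev's inequality. Taking $C=2C_1$ we have $\{|Y_N-L|>CN^{-\theta}\}\subseteq\{|Z_N|>C_1N^{-\theta}\}$ by the second paragraph, so
\[
\mathbb{P}\{|Y_N-L|>CN^{-\theta}\}\le\mathbb{P}\{|Z_N|>C_1N^{-\theta}\}\le\frac{\E|Z_N|^2}{C_1^2\,N^{-2\theta}}\le\frac{C_2}{C_1^2}\,N^{-(1-\theta)},
\]
which is $\le\epsilon$ as soon as $N>c\,\epsilon^{-1/(1-\theta)}$ with $c=(C_2/C_1^2)^{1/(1-\theta)}$, of the asserted dependence. (Replacing Chebyshev by the Gaussian tail $\mathbb{P}\{|Z_N|>\lambda\}=e^{-\lambda^2/\E|Z_N|^2}$ would even give a logarithmic-in-$1/\epsilon$ threshold, but the polynomial one above is all that is claimed.) The genuine obstacle of the argument is the variance bound of the preceding paragraph: producing, in the low-regularity setting $C^{1,1}$, enough oscillatory cancellation in the inner products $(f_{t^2}|f_{s^2})$ to overcome the growth of $\|f_{t^2}\|_{L^2(\partial D)}$. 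This is the content of the lemma \ref{lem:filtering}; it is also the reason the method fails below $C^{1,1}$ and the reason the averaging window must be taken as long as $T_N=N^{3+3\theta/2}$.
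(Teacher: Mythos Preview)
Your argument is correct and tracks the paper's proof closely: both split $Y_N$ into the deterministic piece coming from \eqref{id:DEVwithNOISE} (whose error is $\mathcal{O}(t^{-1})$, hence $\mathcal{O}(T_N^{-1})\ll N^{-\theta}$ after averaging) and the averaged noise, whose second moment is controlled by the filtering lemma~\ref{lem:filtering}. The only substantive difference is in the final probability step: the paper invokes the Borel--Cantelli-type lemma~\ref{lem:convergence} (summing the Chebyshev bounds over all $N\ge N_0$), whereas you apply Chebyshev's inequality at a single $N$. Your route is shorter and is all the theorem actually requires, since the statement is an individual-$N$ estimate; the paper's lemma gives the stronger uniform event $\{|Z_N|\le N^{-\theta}\ \text{for all}\ N\ge N_0\}$, which is not needed here.

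One numerical slip: lemma~\ref{lem:filtering} gives $\E|Z_N|^2\le C\,T_N^{-2/3}=C\,N^{-(2+\theta)}$, not $N^{-(1+\theta)}$. Your weaker bound still yields the stated threshold $N>c\,\epsilon^{-1/(1-\theta)}$, so the argument is unaffected; with the correct exponent your Chebyshev step would in fact produce the better threshold $N\gtrsim\epsilon^{-1/(2-\theta)}$. Also, your description of the deterministic remainder as partly ``oscillating in $t$'' is a slight misreading: in the paper the pointwise remainder is simply $\mathcal{O}(t^{-1})$, and the average over $[T_N,2T_N]$ inherits that rate directly without any further cancellation.
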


The noise have been assumed to be Gaussian, however, in this paper 
this is not required. The reason for us to define the noise as Gaussian is because 
we believe that this will be convenient for the reconstruction of $\gamma$ 
in the interior of $D$.

To prove these theorems, we use the family of solutions constructed by Brown and 
Salo in the papers \cite{zbMATH01731190,zbMATH05045048}. Our main contribution 
consists of noting that these solutions are robust enough to get rid of the 
measurement errors by making an appropriate averaging on the parameter of the 
family. For 
the theorems \ref{th:gamma} and \ref{th:stability_gamma} this is not even required 
because $\| f_N \|_{L^2(\partial D)} = \mathcal{O} (N^{-1/2})$. However, for the theorems 
\ref{th:part_gamma} and \ref{th:stability_part_gamma} we only have that $\| f_N \|
_{L^2(\partial D)} = \mathcal{O} (1)$, which makes necessary the average in 
$\sqrt{N}$. This could be thought as an ergodic property of the traces of this 
family when applied to the noisy data. In other words, the noisy data generated by 
this family of solutions is statistically stable. We hope this paper could 
inspire a different way of dealing with noise in the numerical reconstruction of 
the conductivity.

The classical references for the Calder\'on problem with full ideal data and 
isotropic conductivities are: the works \cite{zbMATH03939884, zbMATH03957806} where Kohn and 
Vogelius proved boundary identification and interior uniqueness of analytic and piecewise analytic conductivities, global uniqueness for $d \geq 3$ \cite{zbMATH04015323} by Sylvester and Uhlmann,  
the work of stability \cite{zbMATH03998383} due to Alessandrini, reconstruction by 
Nachman \cite{zbMATH04105476} and uniqueness in $d=2$ \cite{zbMATH00854849} due to 
Nachman. See also \cite{zbMATH04028037, zbMATH00004861}.
More recent references dealing with questions of regularity in dimension $d = 2$ are
\cite{zbMATH01044195, zbMATH05050053} for uniqueness and \cite{zbMATH01649244, 
zbMATH05223898, zbMATH05704414} for stability. For the uniqueness in dimension $d \geq 3$
\cite{zbMATH00912089, zbMATH02102106, zbMATH02005267, zbMATH06145493, zbMATH06490961, 
zbMATH06534426}, the stability \cite{zbMATH06117512} and the reconstruction 
\cite{0266-5611-32-11-115015}.

Numerical reconstruction on the boundary with infinite precision measurements have
been investigated for $d=2,3$ in \cite{zbMATH02215011, zbMATH05937731}. In 
collaboration with Luca Gerardo-Giorda and Mar\'ia Jes\'us Mu\~noz L\'opez,
we are implementing numerically this scheme of reconstruction with corrupted data.

Regarding stochastic approaches, Dunlop and Stuart have recently given a rigorous 
Bayesian formulation of the electrical impedance tomography problem \cite{zbMATH06648201}.

Our paper contains other two sections. In the first one, we prove the theorems 
\ref{th:gamma} and \ref{th:stability_gamma}. The second one is devoted to the theorems 
\ref{th:part_gamma} and \ref{th:stability_part_gamma}.

\section{Recovering the conductivity at the boundary}\label{sec:recoveryconduct}
In this section we prove the theorems \ref{th:gamma} and 
\ref{th:stability_gamma}. Here we assume $\gamma \in C^{0,1}
(\overline{D})$ with $\gamma (x) \geq \gamma_0 > 0$ for all $x\in 
\overline{D}$ and the boundary of $D$ to be represented locally by the 
graphs of some Lipschitz functions. Thus, for each $P\in\partial D$, there 
is a coordinate system $(y^{\prime}, y_d)\in \R^{d-1}\times\R$, a constant 
$\rho>0$ and a Lipschitz function $\phi : \R^{d-1}\to\R$ so that
$$
B(P,\rho)\cap D=B(P,\rho)\cap\{y\in\R^d : y_d>\phi(y^{\prime})\}
$$
and
$$
B(P,\rho)\cap\partial D=B(P,\rho)\cap\{y\in\R^d : y_d=\phi(y^{\prime})\}.
$$
Let $(p',\phi(p^\prime))$ denote the coordinates of $P$ in the 
corresponding system and $F$ the map $F(x)=(x^{\prime}+p^{\prime}, 
x_d+\phi(x^{\prime}+p^{\prime}))$. Let $\tilde{D}$ denote the pre-image of 
$D$ under $F$, that is $\tilde{D}=F^{-1}(D)$.

Before going further, we observe that if $u$ solves the problem 
\eqref{pb:BVP}, then 
$F^\ast u(x) =u(F(x))$ solves the equation $\nabla\cdot (A_\gamma(x)\nabla F^\ast 
u)=0$ in $\tilde{D}$, where
\begin{equation}\label{eq:A}
A_\gamma(x)=\gamma(F(x))\nabla F^{-1}(F(x))\nabla F^{-1}(F(x))^{t},
\end{equation}
where $\nabla F^{-1} (y)^t$ is the transpose of
\[
\nabla F^{-1} (y) = \left[
\begin{matrix}
 I_{d-1} & 0\\
  -\nabla \phi(y') & 1
\end{matrix}
\right]
\]
with $I_{d-1}$ the identity in $d - 1$. Furthermore, since the Jacobian $|\text{det}\,
\nabla F(x)| = 1$, we 
have that
\begin{equation}
\begin{aligned}
\int_{\partial D} \Lambda_\gamma f \, \overline{f} \enspace &=\int_D \gamma(y) |\nabla 
u(y)|^2 \, dy\\
&=\int_{\tilde{D}}\gamma(F(x))\nabla u (F(x))\cdot \nabla \overline{u}(F(x))|\text{det}\, 
\nabla F(x)| \, dx\\
&=\int_{\tilde{D}}\nabla F^\ast u(x)\cdot(A_\gamma(x) \nabla F^\ast \overline{u}(x)) \, dx.
\end{aligned}
\label{id:change}
\end{equation}

We let $\eta: \R\to[0,1]$ to be a smooth function which satisfies $\eta(t)=1$, $|
t|\leq 1/2$ and $\eta(t)=0$, $|t|\geq 1$. We choose $\xi\in\R^d \setminus \{ 0 \}$ a 
constant vector for which $\xi\cdot A_\gamma(0)\xi=e_d\cdot A_\gamma(0)e_d$ and $\xi\cdot 
A_\gamma(0)e_d=0$. Note that this choice makes $\xi = (\xi', \xi_d)$ satisfy $|\xi'| \neq 0$. For $N \geq M \geq 1$, we set
\begin{equation}\label{eq:v_N}
a_{M,N} (x) =\eta(M|x^{\prime}|)\eta(M x_d) e^{N(i\xi-e_d)\cdot 
x}=\chi(M x)e^{N(i\xi-e_d)\cdot x},
\end{equation}
where the function $\chi(x) = \eta(|x'|) \eta(x_d)$ has been introduced to simplify 
notation. The lemmas 1 and 2 in \cite{zbMATH01731190} can be written in our particular 
case as follows:

\begin{lemma}[Brown \cite{zbMATH01731190}]\label{lem:Br1}\sl
If $\gamma\in C^{0,1}(\overline{D})$, $A_\gamma$ is defined as in \eqref{eq:A} 
and $\nabla \phi (p')$ exists. Then we have 
\begin{align*}
\int_{\tilde{D}} \nabla a_{M,N}(x)\cdot & (A_\gamma(x)\nabla \overline{a_{M,N}}(x)) dx= 
N M^{1-d}\gamma(P)(1+|\nabla\phi(p^{\prime})|^2)\int_{\R^{d-1}}\eta(|
x^{\prime}|)^2 dx^{\prime}
\\
&+\mathcal{O}\Big(M^{2 - d}+ N M^{-d} +N\int_{|x^{\prime}|\leq M^{-1}}|
\nabla\phi(x^{\prime}+p^{\prime})-\nabla\phi(p^{\prime})|\, dx^{\prime}\Big).
\end{align*}
The constant implicit in $\mathcal{O}$ depends on $d$ and on upper bounds 
for $\| \gamma \|_{C^{0,1}(\overline{D})}$ and $\| \phi \|_{C^{0,1}(\R^{d 
- 1})}$.
\end{lemma}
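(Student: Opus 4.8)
The plan is to prove the identity by a direct computation: expand the gradients of $a_{M,N}$ and $\overline{a_{M,N}}$, split the resulting integrand into groups of terms according to how many factors $(\nabla\chi)(Mx)$ they carry, and estimate each group. Since $a_{M,N}(x)=\chi(Mx)e^{N(i\xi-e_d)\cdot x}$ we have $\nabla a_{M,N}(x)=\big(M(\nabla\chi)(Mx)+N\chi(Mx)(i\xi-e_d)\big)e^{N(i\xi-e_d)\cdot x}$, and, $\chi$ being real, the analogous formula for $\nabla\overline{a_{M,N}}$ with $i\xi$ replaced by $-i\xi$. Multiplying, the exponentials collapse to $e^{-2Nx_d}$, and $\nabla a_{M,N}\cdot(A_\gamma\nabla\overline{a_{M,N}})$ becomes $e^{-2Nx_d}$ times the sum of: the \emph{main term} $N^2\chi(Mx)^2\,(i\xi-e_d)\cdot A_\gamma(x)(-i\xi-e_d)$; two \emph{cross terms} of size $MN$, each carrying one factor $(\nabla\chi)(Mx)$ and one factor $\chi(Mx)$; and the \emph{cutoff term} $M^2(\nabla\chi)(Mx)\cdot A_\gamma(x)(\nabla\chi)(Mx)$. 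Because $F$ straightens the boundary, $F(x)\in D$ if and only if $x_d>0$ whenever $F(x)\in B(P,\rho)$; hence for $M$ large enough that $\supp a_{M,N}\subset\{\,|x'|\le 1/M,\ |x_d|\le 1/M\,\}$ lies inside the coordinate patch, $\tilde D$ coincides with $\{x_d>0\}$ on the support of the integrand, so every integral reduces to one over $\{x_d>0\}$ and factors into an $x'$-integral carrying $\eta(M|x'|)$ and an $x_d$-integral carrying $e^{-2Nx_d}\eta(Mx_d)$.

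For the main term I would first record the algebraic identity that follows from the relations defining $\xi$, the symmetry of $A_\gamma$, and the explicit value of $\nabla F^{-1}$ at $F(0)=P$:
\[
(i\xi-e_d)\cdot A_\gamma(0)(-i\xi-e_d)=\xi\cdot A_\gamma(0)\xi+e_d\cdot A_\gamma(0)e_d=2\,e_d\cdot A_\gamma(0)e_d=2\gamma(P)\big(1+|\nabla\phi(p')|^2\big).
\]
Replacing $A_\gamma(x)$ by $A_\gamma(0)$ in the main term and integrating, the $x'$-part yields $M^{1-d}\int_{\R^{d-1}}\eta(|x'|)^2\,dx'$ after rescaling, and the $x_d$-part yields $\int_0^\infty e^{-2Nx_d}\eta(Mx_d)^2\,dx_d=\tfrac1{2N}+\mathcal{O}(N^{-1}e^{-N/M})$; since $N\ge M$ the correction contributes $NM^{1-d}e^{-N/M}=(N/M)M^{2-d}e^{-N/M}=\mathcal{O}(M^{2-d})$, so the leading term of the statement appears. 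The error from $A_\gamma(x)-A_\gamma(0)$ I would split into the part coming from $\gamma(F(x))-\gamma(F(0))$, which is $\mathcal{O}(|x|)$ since $\gamma\in C^{0,1}$ and $\nabla F^{-1}$ is bounded, and the part coming from $\nabla F^{-1}(F(x))\nabla F^{-1}(F(x))^t-\nabla F^{-1}(F(0))\nabla F^{-1}(F(0))^t$, whose entries are polynomials in $\nabla\phi$ and hence $\mathcal{O}(|\nabla\phi(x'+p')-\nabla\phi(p')|)$. Using $|x|\lesssim M^{-1}$ on the support, integrating $N^2e^{-2Nx_d}\chi(Mx)^2$ against the first bound gives $\mathcal{O}(NM^{-d})$ and against the second gives $\mathcal{O}\big(N\!\int_{|x'|\le M^{-1}}|\nabla\phi(x'+p')-\nabla\phi(p')|\,dx'\big)$, which are the remaining two error terms.

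For the cross and cutoff terms the point is that $(\nabla\chi)(Mx)$ vanishes unless $1/(2M)\le|x'|\le 1/M$ or $1/(2M)\le x_d\le 1/M$. On the first region $\int_0^\infty e^{-2Nx_d}\,dx_d=1/(2N)$ while the $x'$-integral runs over a shell of volume $\mathcal{O}(M^{1-d})$, so the cutoff term is $\lesssim M^2\cdot M^{1-d}/N=M^{3-d}/N\lesssim M^{2-d}$ and each cross term is $\lesssim MN\cdot M^{1-d}/N=M^{2-d}$, all using $N\ge M$; on the second region $e^{-2Nx_d}\le e^{-N/M}$ is exponentially small, and the same powers of $M$ and $N$ together with $N\ge M$ and $te^{-t}\le e^{-1}$ for $t\ge1$ again give $\mathcal{O}(M^{2-d})$. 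Throughout, one only uses that $A_\gamma$ is bounded uniformly in $x$, which holds because $|\nabla\phi|\le\|\phi\|_{C^{0,1}(\R^{d-1})}$.

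The main obstacle here is bookkeeping at low regularity rather than any deep step: since $\phi$ is merely Lipschitz, $\nabla\phi$ is only bounded and measurable, $A_\gamma$ is not continuous, and no Taylor expansion is available; one must isolate precisely the dependence on the oscillation $\nabla\phi(x'+p')-\nabla\phi(p')$ (which is finite a.e.\ and, at points such as $p'$ where $\nabla\phi$ exists, makes the last error term genuinely small), and then check that every stray factor produced along the way — $M^{1-d}$, $M^{3-d}/N$, $NM^{1-d}e^{-N/M}$ — is absorbed into $\mathcal{O}\big(M^{2-d}+NM^{-d}+N\!\int_{|x'|\le M^{-1}}|\nabla\phi(x'+p')-\nabla\phi(p')|\,dx'\big)$ using only $N\ge M\ge 1$.
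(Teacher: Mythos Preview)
Your proof is correct. The paper does not actually give a proof of this lemma: it is quoted from Brown \cite{zbMATH01731190} (as Lemmas~1 and~2 there, specialized to the present setup) and used as a black box. Your direct computation---expanding $\nabla a_{M,N}\cdot A_\gamma\nabla\overline{a_{M,N}}$, freezing $A_\gamma$ at $0$ to extract the leading term via the algebraic identity $(i\xi-e_d)\cdot A_\gamma(0)(-i\xi-e_d)=2\gamma(P)(1+|\nabla\phi(p')|^2)$, and then controlling the discrepancy $A_\gamma(x)-A_\gamma(0)$ by splitting off the $\gamma(F(x))-\gamma(P)$ piece (giving $NM^{-d}$) from the $\nabla F^{-1}$ piece (giving $N\int_{|x'|\le M^{-1}}|\nabla\phi(x'+p')-\nabla\phi(p')|\,dx'$)---is exactly Brown's argument. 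The handling of the cross and cutoff terms via the support properties of $\nabla\chi$ and the bound $(N/M)e^{-N/M}\le e^{-1}$ is also standard and correct. The only tacit assumption is that $M$ is large enough for $\supp\chi(M\,\cdot)$ to sit inside the coordinate patch, which is harmless since for bounded $M$ the claimed error absorbs everything.
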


\begin{lemma}[Brown \cite{zbMATH01731190}]\label{lem:Br2}\sl
Consider $a_{M,N}$ as in \eqref{eq:v_N} and let $w_{M,N}$ solve the boundary value problem
\begin{equation*}
\left\{
\begin{aligned}
\nabla\cdot(A_\gamma\nabla w_{M,N})&=-\nabla\cdot(A_\gamma\nabla a_{M,N})  \enspace\  \text{in} \ 
\tilde{D},\\
w_{M,N}|_{\partial\tilde{D}}&=0.
\end{aligned}
\right.
\end{equation*}
If $\nabla \phi (p')$ exists, then
\begin{equation*}
\|\nabla w_{M,N}\|_{L^2(\tilde{D})} \lesssim N^{-\frac{1}{2}} M^\frac{3-d}{2} + M^{\frac{1-d}{2}}+N^{\frac{1}
{2}}\Big(\int_{|x^{\prime}|\leq M^{-1}}|
\nabla\phi(x^{\prime}+p^{\prime})-\nabla\phi(p^{\prime})|^2 \,
dx^{\prime}\Big)^{\frac{1}{2}}
\end{equation*}
where the implicit constant depends on $d$, a lower bound on $\gamma_0$ and on 
upper bounds for $\| \gamma \|_{C^{0,1}(\overline{D})}$ and $\| \phi \|
_{C^{0,1}(\R^{d - 1})}$.
\end{lemma}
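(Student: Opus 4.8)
The plan is to control the residual $\nabla\cdot(A_\gamma\nabla a_{M,N})$ in the dual norm $H^{-1}(\tilde D)$, exploiting two structural features of the amplitude $a_{M,N}$: that the exponential $e^{N(i\xi-e_d)\cdot x}$ solves \emph{exactly} the constant-coefficient equation $\nabla\cdot(A_\gamma(0)\nabla\,\cdot\,)=0$ --- which is precisely the point of the conditions on $\xi$, since $\xi\cdot A_\gamma(0)\xi=e_d\cdot A_\gamma(0)e_d$ and $\xi\cdot A_\gamma(0)e_d=0$ force $(i\xi-e_d)\cdot A_\gamma(0)(i\xi-e_d)=0$ --- and that it oscillates tangentially with the non-degenerate frequency $\xi'\neq 0$. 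Since $A_\gamma(x)=\gamma(F(x))\nabla F^{-1}(F(x))\nabla F^{-1}(F(x))^t$ is real, symmetric and uniformly elliptic on $\tilde D$, with ellipticity constant bounded below in terms of $\gamma_0$ and $\|\phi\|_{C^{0,1}}$, testing the equation for $w_{M,N}$ against $w_{M,N}\in H^1_0(\tilde D)$ gives $\|\nabla w_{M,N}\|_{L^2(\tilde D)}^2\lesssim\big|\int_{\tilde D}A_\gamma\nabla a_{M,N}\cdot\overline{\nabla w_{M,N}}\big|$. Hence it suffices to show that $\big|\int_{\tilde D}A_\gamma\nabla a_{M,N}\cdot\overline{\nabla\varphi}\big|$ is bounded by the right-hand side of the lemma times $\|\nabla\varphi\|_{L^2(\tilde D)}$ for every $\varphi\in H^1_0(\tilde D)$, and then take $\varphi=w_{M,N}$.

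Next I split $A_\gamma\nabla a_{M,N}=A_\gamma(0)\nabla a_{M,N}+(A_\gamma-A_\gamma(0))\nabla a_{M,N}$, and I record that $\nabla a_{M,N}=\big(N(i\xi-e_d)\chi(Mx)+M(\nabla\chi)(Mx)\big)e^{N(i\xi-e_d)\cdot x}$, so $|\nabla a_{M,N}|\lesssim\big(N\chi(Mx)+M|(\nabla\chi)(Mx)|\big)e^{-Nx_d}$ on $\tilde D$. For the frozen term I integrate by parts (the boundary term vanishes since $\varphi\in H^1_0$) and use $\nabla\cdot(A_\gamma(0)\nabla e^{N(i\xi-e_d)\cdot x})=0$, so that only the cutoff commutator survives:
\[
\nabla\cdot\big(A_\gamma(0)\nabla a_{M,N}\big)=e^{N(i\xi-e_d)\cdot x}\Big(M^2(L_0\chi)(Mx)+2MN\,(i\xi-e_d)\cdot A_\gamma(0)(\nabla\chi)(Mx)\Big),
\]
where $L_0=\sum_{j,k}A_\gamma(0)_{jk}\partial_j\partial_k$. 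Both terms are supported in $\{|x'|\sim M^{-1}\}\cup\{x_d\sim M^{-1}\}$; on the slice $x_d\sim M^{-1}$ the weight $e^{-2Nx_d}\le e^{-N/M}$ is negligible, and the remaining slice lies in the slab $\{0<x_d<M^{-1}\}$, on which the one-dimensional Poincar\'e inequality in $x_d$ gives $\|\varphi\|_{L^2}\lesssim M^{-1}\|\nabla\varphi\|_{L^2(\tilde D)}$. Together with $\|(\nabla\chi)(Mx)e^{-Nx_d}\|_{L^2}\lesssim M^{(1-d)/2}N^{-1/2}$ this handles the $M^2(L_0\chi)$-term with the bound $N^{-1/2}M^{(3-d)/2}\|\nabla\varphi\|$.

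The term carrying $2MN\,(i\xi-e_d)\cdot A_\gamma(0)(\nabla\chi)(Mx)$ is the main obstacle: the factor $MN$ makes a naive Cauchy--Schwarz--Poincar\'e estimate give only $N^{1/2}M^{(1-d)/2}\|\nabla\varphi\|$, which is of the same size as $\|\nabla a_{M,N}\|_{L^2}$ and hence useless. This is exactly where $\xi'\neq0$ is used: choose $j\in\{1,\dots,d-1\}$ with $\xi_j\neq0$; since $\partial_j e^{N(i\xi-e_d)\cdot x}=iN\xi_j\,e^{N(i\xi-e_d)\cdot x}$, I may write $N\,e^{N(i\xi-e_d)\cdot x}=(i\xi_j)^{-1}\partial_j e^{N(i\xi-e_d)\cdot x}$ and integrate by parts once more in the tangential variable $x_j$ (no boundary term, by compact support in $x'$). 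The extra derivative then falls either on $(\nabla\chi)(Mx)$, producing $M^2(\partial_j\nabla\chi)(Mx)$ which is still in the slab and handled by Poincar\'e, or on $\bar\varphi$, producing a single derivative $\overline{\partial_j\varphi}\in L^2$; in both cases the net effect is to replace the residual $MN$ by $M$, so the term is again bounded by $N^{-1/2}M^{(3-d)/2}\|\nabla\varphi\|$. This tangential integration by parts is the heart of the proof.

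Finally, for $(A_\gamma-A_\gamma(0))\nabla a_{M,N}$ I use that on $\supp a_{M,N}$, where $|x|\lesssim M^{-1}$, Lipschitz continuity of $\gamma$ (hence of $\gamma\circ F$) and of the metric factor $\nabla F^{-1}$ gives $|A_\gamma(x)-A_\gamma(0)|\lesssim|x_d|+|x'|+|\nabla\phi(x'+p')-\nabla\phi(p')|$. Pairing against $\overline{\nabla\varphi}$ by Cauchy--Schwarz and using the elementary bounds $\int_0^{M^{-1}}x_d^{2k}e^{-2Nx_d}\,dx_d\lesssim N^{-1-2k}$ and $\int_{|x'|\le M^{-1}}dx'\sim M^{1-d}$: all pieces built from $|x_d|$ or from $M(\nabla\chi)(Mx)$ come out $\lesssim M^{(1-d)/2}N^{-1/2}\|\nabla\varphi\|$; the piece built from $|x'|$ times $N(i\xi-e_d)\chi(Mx)$ is treated by the tangential integration by parts of the previous paragraph, now also using $|x'|\lesssim M^{-1}$, and contributes no more than the first two terms on the right-hand side of the lemma; and the piece built from the modulus of continuity of $\nabla\phi$ gives exactly $N^{1/2}\big(\int_{|x'|\le M^{-1}}|\nabla\phi(x'+p')-\nabla\phi(p')|^2\,dx'\big)^{1/2}\|\nabla\varphi\|$. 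Adding the estimates of the last three paragraphs and dividing by $\|\nabla\varphi\|_{L^2}$ yields the lemma; apart from the tangential integration by parts, everything is routine bookkeeping with these Gaussian-type integrals and the Poincar\'e inequality.
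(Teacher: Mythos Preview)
The paper does not prove this lemma; it simply quotes it from Brown \cite{zbMATH01731190}. Your overall strategy---energy estimate, freezing $A_\gamma$ at the origin, exploiting $(i\xi-e_d)\cdot A_\gamma(0)(i\xi-e_d)=0$, and gaining a factor of $N^{-1}$ on the dangerous $MN$-commutator via a tangential integration by parts using $\xi'\neq 0$---is exactly Brown's argument, and the bookkeeping you outline for the $x_d$--pieces, the $\nabla\phi$--piece, and the cutoff commutators is correct.

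There is, however, one step that does not work as written. For the $|x'|$--contribution to $(A_\gamma-A_\gamma(0))\nabla a_{M,N}\cdot\overline{\nabla\varphi}$ you invoke ``the tangential integration by parts of the previous paragraph''. But in the previous paragraph the integrand carried $\bar\varphi$, so the extra $\partial_j$ landed harmlessly on $\varphi$; here the integrand already contains $\overline{\nabla\varphi}$, and transferring $\partial_j$ from $e^{N(i\xi-e_d)\cdot x}$ produces $\overline{\partial_j\nabla\varphi}$, which you cannot control for $\varphi\in H^1_0(\tilde D)$. (Integrating by parts first to replace $\overline{\nabla\varphi}$ by $\bar\varphi$ does not save you: after one tangential IBP a term with $\overline{\partial_j\varphi}$ reappears and a second IBP again hits two derivatives of $\varphi$.) The honest bound for this piece is simply Cauchy--Schwarz together with $|x'|\le M^{-1}$, which gives
\[
N\,M^{-1}\,\|\chi(M\cdot)e^{-Nx_d}\|_{L^2(\tilde D)}\,\|\nabla\varphi\|_{L^2(\tilde D)}\ \lesssim\ N^{1/2}M^{-(d+1)/2}\,\|\nabla\varphi\|_{L^2(\tilde D)}.
\]
This equals $M^{(1-d)/2}$ precisely when $M=N^{1/2}$ (Brown's choice) and is bounded by $M^{(1-d)/2}$ whenever $N\le M^2$, which covers every application in the paper (both $M=N^{1/2}$ and $M=N^{1/(1+\theta)}$ with $\theta<1$). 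So your argument is essentially right; just replace the claimed second use of the tangential IBP by this direct estimate and, if you want the statement for fully general $N\ge M$, record $N^{1/2}M^{-(d+1)/2}$ instead of $M^{(1-d)/2}$.
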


Following \cite{zbMATH01731190}, we choose $M = N^{1/2}$
and consider the function $u_N$ defined by
\begin{equation}
F^\ast u_N = N^{-\frac{1}{2}} M^\frac{d - 1}{2} C_P (a_{M,N} + w_{M,N} )
\label{term:u_N}
\end{equation}
with $C_P = \big( (1+|\nabla\phi(p^{\prime})|^2)\int_{\R^{d-1}}\eta(|
x^{\prime}|)^2 dx^{\prime} \big)^{-1/2} $ and $f_N = u_N|_{\partial D}$. Note that after 
\eqref{id:noisy_data}, \eqref{id:change}, the lemmas \ref{lem:Br1} and \ref{lem:Br2}, and the Cauchy--Schwarz inequality we 
have
\begin{align*}
\mathcal{N}_\gamma (f_N, \overline{f_N}) &= \int_{\tilde{D}}\nabla F^\ast 
u_N(x)\cdot(A_\gamma(x) \nabla F^\ast \overline{u_N}(x)) \, dx \enspace 
+ \sum_{\alpha \in \N^2} (f_N|e_{\alpha_1}) (\overline{f_N}|e_{\alpha_2}) 
X_\alpha \\
& = \gamma(P) + \sum_{\alpha \in \N^2} (f_N|e_{\alpha_1}) (\overline{f_N}|
e_{\alpha_2}) X_\alpha +\mathcal{O}\Big( N^{-\frac{1}{2}} + h(M) \Big),
\end{align*}
where 
\[h(M) = \Big( M^{d - 1} \int_{|x^{\prime}|\leq M^{-1}}|
\nabla\phi(x^{\prime}+p^{\prime})-\nabla\phi(p^{\prime})|^2 \,
dx^{\prime}\Big)^{\frac{1}{2}}. \]

In the following lines, we will show that $\mathcal{N}_\gamma (f_N,
\overline{f_N})$ tends to $\gamma(P)$ as $N$ goes to infinity. The last
term will vanish, in the limit, under appropriate assumptions on $\partial 
D$. To show that the second vanishes almost surely in the limit, we will use a 
very simple idea of Lebesgue spaces---see the lemma \ref{lem:convergence} below. Before
this let us make some comments about $\mathcal{N}_\gamma$.

\begin{lemma}\label{lem:error}\sl
There exists a complete probability
space $(\Omega, \mathcal{H}, \mathbb{P})$, and a countable family
$\{ X_\alpha : \alpha \in \N^2 \}$
of independent complex random variables satisfying \eqref{id:definingX_alpha}.
Moreover, for every $f, g \in L^2(\partial D)$ we have 
that
\[\E \, \Big| \sum_{\alpha \in \N^2} (f|e_{\alpha_1}) (g|e_{\alpha_2}) X_\alpha 
\Big|^2 = \| f \|^2_{L^2(\partial D)} \| g \|^2_{L^2(\partial D)}.\]
\end{lemma}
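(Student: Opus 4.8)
The plan is to treat the two assertions of the lemma separately: the existence of the probability space is a standard measure-theoretic construction, and the variance identity reduces to Parseval's identity once the random series is correctly interpreted as a limit in $L^2(\Omega)$.

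For the first part, I would start from the classical fact that a complete probability space carrying a countable i.i.d.\ family of real standard Gaussian random variables exists (e.g.\ $[0,1]$ with completed Lebesgue measure, or a countable product of one-dimensional Gaussian measures via Kolmogorov's extension theorem). Reindexing such a family by the countable set $\N^2\times\{1,2\}$, call its members $Y_\alpha^{(1)}, Y_\alpha^{(2)}$, and set $X_\alpha=(Y_\alpha^{(1)}+iY_\alpha^{(2)})/\sqrt2$. A one-line computation then gives $\E X_\alpha=0$, $\E(X_\alpha\overline{X_\alpha})=\tfrac12\big(\E(Y_\alpha^{(1)})^2+\E(Y_\alpha^{(2)})^2\big)=1$, and $\E(X_\alpha X_\alpha)=\tfrac12\big(\E(Y_\alpha^{(1)})^2-\E(Y_\alpha^{(2)})^2\big)+i\,\E\big(Y_\alpha^{(1)}Y_\alpha^{(2)}\big)=0$, which is exactly \eqref{id:definingX_alpha}; independence of the $X_\alpha$ is inherited from that of the $Y$'s, and one replaces $(\Omega,\mathcal H,\mathbb P)$ by its completion if necessary.

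For the identity, the first observation is that \eqref{id:definingX_alpha} together with independence yields the orthonormality relation $\E(X_\alpha\overline{X_\beta})=\delta_{\alpha\beta}$: the diagonal terms equal $1$ by hypothesis, and for $\alpha\neq\beta$ the expectation factors as $\E X_\alpha\cdot\E\overline{X_\beta}=0$. Next, by Parseval applied in each index one has $\sum_{\alpha\in\N^2}|(f|e_{\alpha_1})|^2|(g|e_{\alpha_2})|^2=\big(\sum_m|(f|e_m)|^2\big)\big(\sum_n|(g|e_n)|^2\big)=\|f\|_{L^2(\partial D)}^2\|g\|_{L^2(\partial D)}^2<\infty$. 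Hence the finite partial sums $S_F=\sum_{\alpha\in F}(f|e_{\alpha_1})(g|e_{\alpha_2})X_\alpha$ form a Cauchy net in $L^2(\Omega)$, since $\E|S_F-S_{F'}|^2=\sum_{\alpha\in F\triangle F'}|(f|e_{\alpha_1})|^2|(g|e_{\alpha_2})|^2$ by the orthonormality relation; thus the series converges in $L^2(\Omega)$ to the object denoted $\sum_{\alpha\in\N^2}(f|e_{\alpha_1})(g|e_{\alpha_2})X_\alpha$. Passing to the limit in $\E|S_F|^2=\sum_{\alpha\in F}|(f|e_{\alpha_1})|^2|(g|e_{\alpha_2})|^2$ and using continuity of the $L^2(\Omega)$-norm gives the claimed formula; along the way this also shows $\mathcal N_\gamma(f,g)\in L^2(\Omega,\mathcal H,\mathbb P)$, i.e.\ Corollary \ref{cor:noisy_data}.

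There is no genuinely hard step here. The only point deserving care is that the random series is not absolutely convergent in general, so it must be defined as an $L^2(\Omega)$-limit of finite partial sums rather than termwise; the Cauchy property of that net, and with it the legitimacy of interchanging expectation and summation, is precisely what the Parseval bound delivers.
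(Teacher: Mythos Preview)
Your proof is correct and follows essentially the same approach as the paper: the existence is attributed to a standard product-measure construction (the paper cites Ionescu--Tulcea, you invoke Kolmogorov or $[0,1]$ with Lebesgue measure), and the variance identity is derived from the orthonormality $\E(X_\alpha\overline{X_\beta})=\delta_{\alpha\beta}$ together with Parseval. Your version is in fact more careful than the paper's, which leaves the $L^2(\Omega)$-convergence of the random series implicit; your treatment of the partial sums as a Cauchy net makes this rigorous.
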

\begin{proof} The existence part is a consequence of for example Ionescu--Tulcea's theorem. The second part is a simple consequence of the independence of $\{ X_\alpha : \alpha \in \N^2 
\}$ and the facts $\E (X_\alpha \overline{X_\alpha}) = 1$ for all $\alpha \in 
\N^2$ and that $\{ e_n : n \in \N \}$ is an orthonormal basis of $L^2 (\partial 
D)$.
\end{proof}

\begin{corollary}\label{cor:noisy_data}\sl The corrupted data
$$
\mathcal{N}_\gamma: H^{1/2}(\partial D)\times H^{1/2}(\partial D)\rightarrow L^2(\Omega, \mathcal{H}, P)
$$
is bounded in the sense that, there exists a constant $C>0$ depending on $d$ and $\partial D$ such that
\[
\E \big| \mathcal{N}_\gamma (f, g) \big|^ 2 \leq C ( 1 + \|\gamma\|^2_{L^\infty (D)} ) \| f \|^2_{H^{1/2}(\partial D)} \| g \|^2_{H^{1/2}(\partial D)}
\]
for all $f,g \in H^{1/2}(\partial D)$. Consequently, $| \mathcal{N}_\gamma (f, g)| < \infty$ almost surely.
\end{corollary}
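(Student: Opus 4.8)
The plan is to split the bound into two contributions corresponding to the two terms in the definition \eqref{id:noisy_data} of $\mathcal{N}_\gamma$. Using the elementary inequality $|a+b|^2 \leq 2|a|^2 + 2|b|^2$ and linearity of expectation, it suffices to bound
\[
\E \Big| \int_{\partial D} \Lambda_\gamma f \, g \Big|^2 \qquad \text{and} \qquad \E \, \Big| \sum_{\alpha \in \N^2} (f|e_{\alpha_1}) (g|e_{\alpha_2}) X_\alpha \Big|^2
\]
separately. The first quantity is deterministic, so its expectation equals $|\int_{\partial D} \Lambda_\gamma f \, g|^2$; this is controlled by the standard operator-norm estimate for the Dirichlet-to-Neumann map, namely $\| \Lambda_\gamma \|_{H^{1/2}(\partial D) \to H^{-1/2}(\partial D)} \lesssim 1 + \|\gamma\|_{L^\infty(D)}$, which follows from the variational characterization: $|\langle \Lambda_\gamma f, g\rangle| \leq \|\gamma\|_{L^\infty} \|\nabla u_f\|_{L^2(D)} \|\nabla u_g\|_{L^2(D)}$ together with the standard elliptic energy estimate $\|\nabla u_f\|_{L^2(D)} \lesssim \gamma_0^{-1/2}(1+\|\gamma\|_{L^\infty})^{1/2}\|f\|_{H^{1/2}(\partial D)}$ and the trace theorem. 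Hence this term is bounded by $C(1 + \|\gamma\|_{L^\infty(D)}^2)\|f\|_{H^{1/2}}^2 \|g\|_{H^{1/2}}^2$. The second quantity is handled directly by the identity proved in the lemma \ref{lem:error}, which gives exactly $\|f\|_{L^2(\partial D)}^2 \|g\|_{L^2(\partial D)}^2 \leq \|f\|_{H^{1/2}(\partial D)}^2 \|g\|_{H^{1/2}(\partial D)}^2$ since the $L^2$ norm is dominated by the $H^{1/2}$ norm on $\partial D$. Combining the two bounds yields the claimed inequality with a constant $C$ depending only on $d$ and $\partial D$ (through the trace constant and the elliptic estimate).

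The final conclusion that $|\mathcal{N}_\gamma(f,g)| < \infty$ almost surely is immediate: a random variable with finite second moment lies in $L^2(\Omega)$, hence is finite $\mathbb{P}$-almost surely. In particular this also verifies the statement in the corollary \ref{cor:noisy_data} that $\mathcal{N}_\gamma$ maps into $L^2(\Omega, \mathcal{H}, \mathbb{P})$.

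I do not expect any serious obstacle here; the only point requiring a little care is making the convergence of the random series $\sum_{\alpha \in \N^2} (f|e_{\alpha_1})(g|e_{\alpha_2}) X_\alpha$ in $L^2(\Omega)$ rigorous before invoking the lemma \ref{lem:error}. This follows because the partial sums form a Cauchy sequence in $L^2(\Omega)$: by independence and $\E(X_\alpha \overline{X_\alpha}) = 1$, the $L^2(\Omega)$-norm of a tail $\sum_{\alpha \in S} (f|e_{\alpha_1})(g|e_{\alpha_2}) X_\alpha$ over a finite set $S$ equals $\big(\sum_{\alpha \in S} |(f|e_{\alpha_1})|^2 |(g|e_{\alpha_2})|^2\big)^{1/2}$, which is the tail of the convergent product series $\sum_{m}|(f|e_m)|^2 \sum_n |(g|e_n)|^2 = \|f\|_{L^2}^2\|g\|_{L^2}^2$ by Parseval. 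Once the series is known to converge in $L^2(\Omega)$, continuity of the norm transfers the identity of the lemma \ref{lem:error} to the limit, and the rest is as above.
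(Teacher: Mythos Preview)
Your proposal is correct and follows exactly the route the paper implicitly intends: the corollary is stated in the paper without an explicit proof, as an immediate consequence of the lemma~\ref{lem:error} together with the standard $H^{1/2}\to H^{-1/2}$ boundedness of $\Lambda_\gamma$. Your splitting into the deterministic and random parts, the use of Parseval/Cauchy-in-$L^2(\Omega)$ for the series, and the finite-second-moment conclusion are all what one would write out if asked to supply the details.

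One small sharpening: to get the constant depending only on $d$ and $\partial D$ (and not on $\gamma_0$) with the exact power $\|\gamma\|_{L^\infty}^2$, use the Dirichlet minimisation directly rather than the energy estimate you quote. Namely, with $u_f$ the $\gamma$-harmonic extension of $f$ and $E_f$ any $H^1$-extension,
\[
\int_D \gamma |\nabla u_f|^2 \;\leq\; \int_D \gamma |\nabla E_f|^2 \;\leq\; \|\gamma\|_{L^\infty(D)} \, C_{\partial D}\,\|f\|_{H^{1/2}(\partial D)}^2,
\]
and then $|\langle\Lambda_\gamma f,g\rangle| \le \big(\int_D\gamma|\nabla u_f|^2\big)^{1/2}\big(\int_D\gamma|\nabla u_g|^2\big)^{1/2}$ by Cauchy--Schwarz in the $\gamma$-weighted inner product. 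This yields $|\langle\Lambda_\gamma f,g\rangle|^2 \le C_{\partial D}\,\|\gamma\|_{L^\infty}^2\|f\|_{H^{1/2}}^2\|g\|_{H^{1/2}}^2$ with no $\gamma_0$ in the constant, matching the statement precisely. Your version gives a slightly worse but still finite bound, so it is not a gap---just a cosmetic point.
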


As a consequence of the lemma \ref{lem:error}
\[\E \, \Big| \sum_{\alpha \in \N^2} (f_N|e_{\alpha_1}) (\overline{f_N}|e_{\alpha_2}) 
X_\alpha \Big|^2 = \| f_N \|^4_{L^2(\partial D)}.\]
On the other hand, a simple computation shows that
\begin{equation}
\| f_N \|^2_{L^2(\partial D)} \leq C_{\partial D} N^{-1}
\label{eq:cons_boundary}
\end{equation}
where the constant $C_{\partial D} > 0$ only depends on an upper bound for $\| \phi \|_{C^{0,1}(\R^{d - 1})}$.

The rest of argument relies on the following lemma.

\begin{lemma}\sl\label{lem:convergence}
Let $(X, \Sigma, \mu)$ be a measure space and $\{f_n : n\in\N\}$ be a sequence in $L^p(X, 
\Sigma, \mu)$ and $f\in L^p(X,\Sigma, \mu)$ with $1 \leq p < \infty$ such that
$$
f_n\to f \quad \text{in}\  L^p(X,\Sigma, \mu),
$$
as $n\to\infty$. Assume that there exists a sequence $\{\lambda_n : n\in \N\}$ of positive 
real numbers such that $\lambda_n\to 0$ as $n\to\infty$ and
$$
\sum_{n\in\N}\frac{1}{\lambda_n^p}\int_X|f_n-f|^p d\mu <\infty.
$$
Then,
$$
f_n(x)\to f(x)
$$
as $n\to\infty$ for almost every $x\in X$.

Assume furthermore that $\mu(X)<\infty$. Then, for every $\epsilon>0$, there exists a 
$n_0\in \N$ such that 
$$
\mu(\{x\in X : |f_n(x)-f(x)|\leq \lambda_n\})\geq \mu(X)-\epsilon,
$$
for $n\geq n_0$.
\end{lemma}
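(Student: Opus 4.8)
The plan is to treat the two assertions separately, both by elementary manipulations with the series
$\sum_{n\in\N}\lambda_n^{-p}\int_X|f_n-f|^p\,d\mu$, which is finite by hypothesis. Note first that $f_n-f$, hence $|f_n-f|^p$, is measurable since $f_n,f\in L^p(X,\Sigma,\mu)$, so all the sets and functions below are measurable.

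For the almost-everywhere convergence, I would introduce the nonnegative measurable function
$g:=\sum_{n\in\N}\lambda_n^{-p}|f_n-f|^p$. By Tonelli's theorem its integral equals $\sum_{n\in\N}\lambda_n^{-p}\int_X|f_n-f|^p\,d\mu<\infty$, so $g\in L^1(X,\Sigma,\mu)$ and in particular $g(x)<\infty$ for $\mu$-almost every $x$. For each such $x$ the numerical series $\sum_{n\in\N}\lambda_n^{-p}|f_n(x)-f(x)|^p$ converges, so its general term tends to $0$; that is, $|f_n(x)-f(x)|^p=o(\lambda_n^p)$ as $n\to\infty$, and since $\lambda_n\to 0$ this forces $|f_n(x)-f(x)|\to 0$, which is the claim. (The hypothesis that $f_n\to f$ in $L^p$ is in fact not needed for this part; it follows.)

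For the second assertion, assuming $\mu(X)<\infty$, I would fix $\epsilon>0$ and set $A_n:=\{x\in X:|f_n(x)-f(x)|>\lambda_n\}$. Chebyshev's inequality gives
$\mu(A_n)=\mu(\{|f_n-f|^p>\lambda_n^p\})\le\lambda_n^{-p}\int_X|f_n-f|^p\,d\mu$.
Since the series $\sum_{n\in\N}\lambda_n^{-p}\int_X|f_n-f|^p\,d\mu$ converges, its general term $\lambda_n^{-p}\int_X|f_n-f|^p\,d\mu$ tends to $0$, hence $\mu(A_n)\to 0$, and there is $n_0\in\N$ with $\mu(A_n)\le\epsilon$ for $n\ge n_0$. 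Because $\mu(X)<\infty$ and $\{x\in X:|f_n(x)-f(x)|\le\lambda_n\}$ is the complement of $A_n$, we conclude $\mu(\{x\in X:|f_n(x)-f(x)|\le\lambda_n\})=\mu(X)-\mu(A_n)\ge\mu(X)-\epsilon$ for $n\ge n_0$.

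There is essentially no obstacle here: both parts are soft consequences of the summability hypothesis, the only points requiring a word of care being the use of Tonelli to exchange sum and integral, the finiteness of $\mu(X)$ to pass to complements in the last step, and the fact that $\lambda_n\to 0$ to upgrade $|f_n(x)-f(x)|^p=o(\lambda_n^p)$ to genuine convergence to $0$. If desired, the first part could alternatively be obtained from a Borel--Cantelli argument along the sets $A_n$, but the direct approach above is cleaner.
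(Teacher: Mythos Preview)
Your proof is correct. The first part is essentially the same idea as the paper's, just phrased via the integrable majorant $g=\sum_n\lambda_n^{-p}|f_n-f|^p$ rather than via the limsup of the sets $E_n=\{|f_n-f|>\lambda_n\}$; as you note yourself, these are two faces of the same Borel--Cantelli argument.

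For the second part there is a small but genuine difference worth flagging. You bound each $\mu(A_n)$ individually by Chebyshev and use that the general term of the series tends to $0$, obtaining for each $n\ge n_0$ a set of measure $\ge\mu(X)-\epsilon$ on which $|f_n-f|\le\lambda_n$. The paper instead chooses $n_0$ so that the \emph{tail} $\sum_{n\ge n_0}\lambda_n^{-p}\int|f_n-f|^p\,d\mu\le\epsilon$, and then shows that on the single set $X\setminus\bigcup_{n\ge n_0}E_n$, of measure $\ge\mu(X)-\epsilon$, one has $|f_n-f|\le\lambda_n$ for \emph{all} $n\ge n_0$ simultaneously. Both prove the lemma as stated, but the paper's version is uniform in $n$ and, more importantly for the applications, pins down $n_0$ by the explicit tail condition; this is recorded in the remark following the lemma and is exactly what is invoked later to produce the quantitative thresholds $N\ge c\,\epsilon^{-(1+\theta)/(1-\theta)}$ in the stability theorems. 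Your choice of $n_0$ (individual terms $\le\epsilon$) would also work there, and would even give a slightly less restrictive threshold, but it does not match the form the paper uses downstream.
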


\begin{proof}
The first part of this lemma follows from the fact that
\begin{equation}
\mu \Big(\bigcap_{n = 1}^\infty \bigcup_{k = n}^\infty E_k\Big) = 0
\label{id:liminf}
\end{equation}
for $E_n = \{ x \in X : |f_n(x)-f(x)| > \lambda_n \}$. Indeed, if $x 
\notin \cap_{n = 1}^\infty \cup_{k = n}^\infty E_k $ there exists a $n_x 
\in \N$ such that $x \notin \cup_{k = n}^\infty E_k$ for all $n \geq n_x$, 
which means that
\[|f_n(x) - f(x)| \leq \lambda_n, \quad \forall\, n \geq n_x.\]
The identity \eqref{id:liminf} holds as a consequence of the following 
inequalities
\[\mu \Big(\bigcap_{n = 1}^\infty \bigcup_{k = n}^\infty E_k\Big) \leq 
\sum_{k = n}^\infty \mu (E_k) \leq \sum_{k = n}^\infty \frac{1}
{\lambda_k^p} \int_X |f_k - f|^p \, d\mu.\]

To prove the second part, let $n_0$ be such that
\begin{equation}
\sum_{n = n_0}^\infty \frac{1}{\lambda_n^p} \int_X |f_n - f|^p \, d\mu 
\leq \epsilon.
\label{in:choice_eps}
\end{equation}
Then,
\[\mu\Big(X \setminus \bigcup_{n = n_0}^\infty E_n\Big) \geq \mu(X) - 
\sum_{n = n_0}^\infty \mu (E_n) \geq \mu(X) - \epsilon \]
where $ x \in X \setminus \cup_{n = n_0}^\infty E_n$ if and only if $|
f_n(x) - f(x)| \leq \lambda_n$ for all $n \geq n_0$.
\end{proof}

\begin{remark} \label{rem:choice_n0} Note that, given $\epsilon > 0$, the $n_0$ stated in the lemma \ref{lem:convergence} only has to satisfy \eqref{in:choice_eps}.
\end{remark}

Applying the first part of this lemma to $L^2(\Omega, \mathcal{H}, 
\mathbb{P})$, the 
sequence
$$ \sum_{\alpha \in \N^2} (f_N|e_{\alpha_1}) (\overline{f_N}|
e_{\alpha_2}) X_\alpha $$
and $\lambda_N = N^{-\theta}$ with $\theta \in 
(0, 1/2)$, we have the following proposition:

\begin{proposition}\label{prop:recon_gamma} \sl Let $P \in \partial D$ 
such that the corresponding 
boundary function $\phi$ satisfies
\begin{equation}
\lim_{r \to 0} \frac{1}{r^{d - 1}} \int_{|x^{\prime}|\leq 
r}| \nabla\phi(x^{\prime}+p^{\prime})-\nabla\phi(p^{\prime})|^2 \,
dx^{\prime} = 0.
\label{id:Lebesgue_lim}
\end{equation}
Then, if $N \in \N \setminus \{ 0\}$ we have that
\[\lim_{N \to \infty} \mathcal{N}_\gamma (f_N, \overline{f_N}) = \gamma(P) \]
almost surely.
\end{proposition}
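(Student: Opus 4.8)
The plan is to combine the deterministic expansion of $\mathcal{N}_\gamma(f_N,\overline{f_N})$ obtained above with an almost-sure decay estimate for the stochastic error, the latter being a direct application of Lemma~\ref{lem:convergence}. Recall that, after \eqref{id:noisy_data}, \eqref{id:change}, the Lemmas~\ref{lem:Br1} and~\ref{lem:Br2}, and the Cauchy--Schwarz inequality, we have written (with the choice $M = N^{1/2}$)
\[
\mathcal{N}_\gamma(f_N,\overline{f_N}) = \gamma(P) + \sum_{\alpha\in\N^2}(f_N|e_{\alpha_1})(\overline{f_N}|e_{\alpha_2})\,X_\alpha + \mathcal{O}\big(N^{-1/2} + h(M)\big).
\]
Thus it suffices to show that the two remaining terms tend to $0$, the first surely and the second almost surely.

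First I would dispose of the deterministic remainder. Since $h(M)^2 = M^{d-1}\int_{|x'|\le M^{-1}}|\nabla\phi(x'+p')-\nabla\phi(p')|^2\,dx'$ and $M^{-1} = N^{-1/2}\to 0$ as $N\to\infty$, the hypothesis \eqref{id:Lebesgue_lim} (applied with $r = M^{-1}$) says exactly that $h(M)\to 0$, so the whole $\mathcal{O}(N^{-1/2}+h(M))$ term vanishes in the limit. This is the only place where the local regularity of $\partial D$ enters, and \eqref{id:Lebesgue_lim} holds at every Lebesgue point of $\nabla\phi$, hence for almost every $P\in\partial D$ when $\phi$ is merely Lipschitz.

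Next I would treat the stochastic term. By Lemma~\ref{lem:error} with $f = f_N$ and $g = \overline{f_N}$, and then by \eqref{eq:cons_boundary},
\[
\E\,\Big|\sum_{\alpha\in\N^2}(f_N|e_{\alpha_1})(\overline{f_N}|e_{\alpha_2})\,X_\alpha\Big|^2 = \|f_N\|^4_{L^2(\partial D)} \le C_{\partial D}^2\,N^{-2},
\]
so in particular this sequence converges to $0$ in $L^2(\Omega,\mathcal{H},\mathbb{P})$. Fixing $\theta\in(0,1/2)$ and putting $\lambda_N = N^{-\theta}$, the same bound gives
\[
\sum_{N\ge 1}\frac{1}{\lambda_N^2}\,\E\,\Big|\sum_{\alpha\in\N^2}(f_N|e_{\alpha_1})(\overline{f_N}|e_{\alpha_2})\,X_\alpha\Big|^2 \le C_{\partial D}^2\sum_{N\ge 1}N^{2\theta-2} < \infty
\]
because $2\theta-2<-1$. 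Hence the first part of Lemma~\ref{lem:convergence}, applied on $(\Omega,\mathcal{H},\mathbb{P})$ to the sequence $\sum_{\alpha\in\N^2}(f_N|e_{\alpha_1})(\overline{f_N}|e_{\alpha_2})\,X_\alpha$ with limit $0$ and rate $\lambda_N$, shows that this sum tends to $0$ almost surely. Combining with the previous paragraph, $\mathcal{N}_\gamma(f_N,\overline{f_N})\to\gamma(P)$ almost surely.

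I do not expect a genuine obstacle here: the analytic content already sits in the expansion above and in Lemma~\ref{lem:error}. The only point deserving attention is the arithmetic of the exponents, namely that the Brown scaling $M = N^{1/2}$ forces $\|f_N\|^2_{L^2(\partial D)} = \mathcal{O}(N^{-1})$, so that $N^{2\theta}\|f_N\|^4_{L^2(\partial D)}$ is summable precisely when $\theta<1/2$; this is why that range of $\theta$ appears in the discussion preceding the proposition.
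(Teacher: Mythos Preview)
Your proof is correct and follows exactly the route the paper takes: the deterministic remainder $\mathcal{O}(N^{-1/2}+h(M))$ vanishes by the Lebesgue-point hypothesis \eqref{id:Lebesgue_lim}, and the stochastic term is killed almost surely by the first part of Lemma~\ref{lem:convergence} with $\lambda_N=N^{-\theta}$, $\theta\in(0,1/2)$, using the $L^2$ bound from Lemma~\ref{lem:error} and \eqref{eq:cons_boundary}. You have simply written out the summability check $\sum N^{2\theta-2}<\infty$ that the paper leaves implicit.
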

It is well known that, for almost every $P \in \partial D$, its 
corresponding boundary functions $\phi$ satisfies \eqref{id:Lebesgue_lim}. 
Therefore, the theorem \ref{th:gamma} holds.

The theorem \ref{th:stability_gamma} will be a consequence of the 
following proposition.
\begin{proposition}\sl Let $P \in \partial D$ 
such that the corresponding 
boundary function $\phi$ satisfies
\begin{equation}
|\nabla\phi(x^{\prime}+p^{\prime})-\nabla\phi(p^{\prime})| \leq L |x^{\prime}|^\theta
\label{in:C1lambda}
\end{equation}
with $L > 0$ and $0 < \theta < 1 $. Consider $u_N$ as in \eqref{term:u_N} with $M = N^{1/(1 + \theta)}$ and $N \in \N \setminus \{ 0 \}$. Then, there exists a constant $C > 0$ (depending on $d$, a lower bound on $\gamma_0$ and on 
upper bounds for $\| \gamma \|_{C^{0,1}(\overline{D})}$, $\| \phi 
\|_{C^{0,1}(\R^{d - 1})}$ and $L$) such that, for every $\epsilon > 0$, we have
\[ \mathbb{P} \{ | \mathcal{N}_\gamma (f_N, \overline{f_N}) - \gamma(P) | \leq C N^{-\theta/(1 + \theta)} \} \geq 1 - \epsilon \qquad \forall \, N \geq c \epsilon^{-\frac{1+\theta}{1 - \theta}}. \]
Here $c$ only depends on $C_{\partial D}$ and $\theta$.
\end{proposition}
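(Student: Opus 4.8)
The plan is to rerun the computation that precedes the statement, but keeping $M$ free rather than fixing $M=N^{1/2}$. First I would record, using \eqref{id:noisy_data}, \eqref{id:change}, the lemmas \ref{lem:Br1} and \ref{lem:Br2}, the Cauchy--Schwarz inequality, and the ellipticity of $A_\gamma$ (whose ellipticity constant is controlled by $\gamma_0$ and $\| \gamma \|_{C^{0,1}(\overline{D})}$), the decomposition
\[\mathcal{N}_\gamma (f_N, \overline{f_N}) = \gamma(P) + \sum_{\alpha \in \N^2} (f_N|e_{\alpha_1})(\overline{f_N}|e_{\alpha_2}) X_\alpha + R_N,\]
where the deterministic remainder obeys $|R_N| \lesssim N^{-1}M + M^{-1} + N^{-1/2} + h(M)$ with $h(M)$ as in the text, the implicit constant depending only on $d$, a lower bound on $\gamma_0$, and upper bounds for $\| \gamma \|_{C^{0,1}(\overline{D})}$ and $\| \phi \|_{C^{0,1}(\R^{d-1})}$. (For $M=N^{1/2}$ this collapses to the $\mathcal{O}(N^{-1/2}+h(M))$ quoted before the statement.)

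Next I would estimate $h(M)$ under the hypothesis \eqref{in:C1lambda}: since $|\nabla\phi(x'+p')-\nabla\phi(p')|^2\le L^2|x'|^{2\theta}$, integrating in polar coordinates over $\{|x'|\le M^{-1}\}$ gives $\int_{|x'|\le M^{-1}}|\nabla\phi(x'+p')-\nabla\phi(p')|^2\,dx'\lesssim L^2 M^{-(d-1)-2\theta}$, hence $h(M)\lesssim L\,M^{-\theta}$. Then I would take $M=N^{1/(1+\theta)}$: this makes $N^{-1}M$ and $M^{-\theta}$ equal, both being $N^{-\theta/(1+\theta)}$, while $M^{-1}=N^{-1/(1+\theta)}$ and $N^{-1/2}$ are smaller precisely because $\theta<1$. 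Therefore $|R_N|\le C'N^{-\theta/(1+\theta)}$ with $C'$ depending only on $d$, $\gamma_0$, and upper bounds for $\| \gamma \|_{C^{0,1}(\overline{D})}$, $\| \phi \|_{C^{0,1}(\R^{d-1})}$ and $L$. I would also note that \eqref{eq:cons_boundary} remains valid for this $M$ (on $\partial\tilde D$ one has $w_{M,N}=0$, and the normalization $N^{-1/2}M^{(d-1)/2}$ in \eqref{term:u_N} exactly compensates the factor coming from the measure of the support of $a_{M,N}$ on $\partial\tilde D$), so $\| f_N \|_{L^2(\partial D)}^2\le C_{\partial D}N^{-1}$.

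For the stochastic term I would invoke the second part of the lemma \ref{lem:convergence}, via the remark \ref{rem:choice_n0}, in the probability space $(\Omega,\mathcal{H},\mathbb{P})$ with $p=2$, limit $f\equiv 0$, sequence $f_N=\sum_{\alpha\in\N^2}(f_N|e_{\alpha_1})(\overline{f_N}|e_{\alpha_2})X_\alpha$, and gauge $\lambda_N=N^{-\theta/(1+\theta)}$. By the lemma \ref{lem:error} and \eqref{eq:cons_boundary}, $\E|f_N|^2=\| f_N \|_{L^2(\partial D)}^4\le C_{\partial D}^2 N^{-2}$, so $\lambda_N^{-2}\,\E|f_N|^2\le C_{\partial D}^2 N^{2\theta/(1+\theta)-2}=C_{\partial D}^2 N^{-2/(1+\theta)}$; since $2/(1+\theta)>1$ exactly because $\theta<1$, an integral comparison bounds the tail $\sum_{N\ge n_0}\lambda_N^{-2}\,\E|f_N|^2$ by a constant depending only on $\theta$ times $C_{\partial D}^2\,n_0^{-(1-\theta)/(1+\theta)}$. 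Requiring this to be $\le\epsilon$ is precisely \eqref{in:choice_eps} and forces $n_0\ge c\,\epsilon^{-(1+\theta)/(1-\theta)}$ with $c$ depending only on $C_{\partial D}$ and $\theta$; the lemma then gives $\mathbb{P}\{|f_N|\le\lambda_N\}\ge 1-\epsilon$ for every $N\ge n_0$.

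Finally I would combine: on the event $\{|f_N|\le\lambda_N\}$, which has probability at least $1-\epsilon$ once $N\ge c\,\epsilon^{-(1+\theta)/(1-\theta)}$, the triangle inequality gives $|\mathcal{N}_\gamma(f_N,\overline{f_N})-\gamma(P)|\le\lambda_N+|R_N|\le(1+C')N^{-\theta/(1+\theta)}$, so $C=1+C'$ works, with the asserted dependencies. The argument is mostly bookkeeping; the one delicate point is the choice of $M$, which must simultaneously balance the competing deterministic error terms and make the series $\sum_N\lambda_N^{-2}\,\E|f_N|^2$ summable with the power of $\epsilon$ in the $N$-threshold matching the claimed exponent, and the standing assumption $0<\theta<1$ is exactly what makes both work with $M=N^{1/(1+\theta)}$.
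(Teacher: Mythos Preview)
Your proof is correct and follows essentially the same route as the paper's: you invoke the same lemmas to isolate the deterministic remainder, use the H\"older hypothesis \eqref{in:C1lambda} to bound it by $M^{-\theta}$, pick $M=N^{1/(1+\theta)}$ to balance, and then apply the second part of lemma \ref{lem:convergence} with $\lambda_N=N^{-\theta/(1+\theta)}$ exactly as the paper does. The only differences are cosmetic---you keep $M$ free a little longer and track the four error contributions $N^{-1}M$, $M^{-1}$, $N^{-1/2}$, $h(M)$ explicitly, whereas the paper already writes $\mathcal{O}(N^{-1/2}+M^{-\theta})$ with the specific $M$ in mind---and you should avoid reusing the symbol $f_N$ for the random sequence in lemma \ref{lem:convergence}, since it already denotes the boundary trace.
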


\begin{proof} Using \eqref{id:noisy_data}, \eqref{id:change}, the lemmas \ref{lem:Br1} and \ref{lem:Br2}, and \eqref{in:C1lambda} we can see that
\begin{align*}
\mathcal{N}_\gamma (f_N, \overline{f_N}) &= \int_{\tilde{D}}\nabla F^\ast 
u_N(x)\cdot(A_\gamma(x) \nabla F^\ast \overline{u_N}(x)) \, dx \enspace 
+ \sum_{\alpha \in \N^2} (f_N|e_{\alpha_1}) (\overline{f_N}|e_{\alpha_2}) 
X_\alpha \\
& = \gamma(P) + \sum_{\alpha \in \N^2} (f_N|e_{\alpha_1}) (\overline{f_N}|
e_{\alpha_2}) X_\alpha +\mathcal{O}\Big( N^{-\frac{1}{2}} + M^{-\theta} \Big).
\end{align*}
Applying the second part of the lemma \ref{lem:convergence} for 
$L^2(\Omega, \mathcal{H}, 
\mathbb{P})$, $\lambda_N = N^{-\theta/(1 + \theta)}$ and the 
sequence $ \{ \sum (f_N|e_{\alpha_1}) (\overline{f_N}|
e_{\alpha_2}) X_\alpha : N \in \N \setminus \{ 0 \} \} $, and using
\[\E \, \Big| \sum_{\alpha \in \N^2} (f_N|e_{\alpha_1}) (\overline{f_N}|e_{\alpha_2}) 
X_\alpha \Big|^2 \leq C_{\partial D}^2 N^{-2}\]
with $C_{\partial D}$ as in \eqref{eq:cons_boundary},
we know that:
\[\mathbb{P} \{ | \sum (f_N|e_{\alpha_1}) (\overline{f_N}|
e_{\alpha_2}) X_\alpha | \leq N^{-\theta/(1 + \theta)} \}  \geq 1 - \epsilon\]
for every $N \geq N_0 $. According to the remark \ref{rem:choice_n0}, it is enough to choose $N_0 > 1$ satisfying
\[\sum_{N = N_0}^\infty \frac{C_{\partial D}^2}{N^\frac{2}{1 + \theta}} \leq \epsilon,\]
which holds whenever
\begin{equation}
\frac{C^2_{\partial D}}{ \epsilon} \frac{1+ \theta}{1 - \theta} < (N_0 - 1)^\frac{1 - \theta}{1 + \theta}.
\label{es:small_c}
\end{equation}
From the identity at the beginning of this proof and the choice $M = N^{1/(1+\theta)}$, we see that there exists a constant $C>0$ such that
\[\{ | \sum (f_N|e_{\alpha_1}) (\overline{f_N}|
e_{\alpha_2}) X_\alpha | \leq N^{-\theta/(1 + \theta)} \} \subset \{ | 
\mathcal{N}_\gamma (f_N, \overline{f_N}) - \gamma(P) | \leq C  
N^{-\theta/(1 + \theta)} \},\]
which is enough to conclude the proof.
\end{proof}

If $D$ has a $C^{1,\theta}$ boundary, then every point $P \in \partial D$ satisfies \eqref{in:C1lambda}, and consequently the theorem \ref{th:stability_gamma} holds.

\section{The normal derivative of the conductivity at the boundary}
Here we prove the theorems \ref{th:part_gamma} and \ref{th:stability_part_gamma}. We start by considering an integral identity that brings up the gradient of the conductivity.

\begin{lemma}\sl
Let $u\in H^1(D)$ be the unique solution of the boundary value problem \eqref{pb:BVP} and $v\in H^1(D)$ the harmonic extension in $D$ of $g \in H^{1/2}(\partial D)$. Then,
\begin{equation}\label{eq:idenderiv}
\int_{\partial D} \Big( \frac{1}{\gamma}\Lambda_\gamma-\Lambda \Big)f \,  g \enspace = \, -\int_D \frac{\nabla\gamma}{\gamma}\cdot\nabla u \, v
\end{equation}
where $\Lambda$ is the $DN$ map associated to the conductivity identically one.
\end{lemma}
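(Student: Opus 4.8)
The plan is to rewrite both boundary terms on the left-hand side of \eqref{eq:idenderiv} through the variational (weak) definitions of the two Dirichlet-to-Neumann maps, and then to test the weak formulation of \eqref{pb:BVP} against the function $v/\gamma$.

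First I would observe that, since $\gamma\in C^{0,1}(\overline{D})$ with $\gamma\geq\gamma_0>0$, the function $1/\gamma$ is again Lipschitz on $\overline{D}$; on a Lipschitz boundary its trace is therefore a bounded pointwise multiplier on $H^{1/2}(\partial D)$ and, by transposition, on $H^{-1/2}(\partial D)$. Consequently $\tfrac1\gamma\Lambda_\gamma f\in H^{-1/2}(\partial D)$ is well defined and, $\gamma$ being real, $\langle\tfrac1\gamma\Lambda_\gamma f,g\rangle=\langle\Lambda_\gamma f,\tfrac{g}{\gamma}\rangle$. Now recall the weak formulation of \eqref{pb:BVP}: for every $\Phi\in H^1(D)$ one has $\langle\Lambda_\gamma f,\Phi|_{\partial D}\rangle=\int_D\gamma\,\nabla u\cdot\nabla\Phi$, independently of the chosen extension $\Phi$ of its trace. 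I would take $\Phi=v/\gamma$, which lies in $H^1(D)$ because $v\in H^1(D)$ and $1/\gamma\in C^{0,1}(\overline{D})$, and whose trace on $\partial D$ is $g/\gamma$. Using the Leibniz rule $\nabla(v/\gamma)=\gamma^{-1}\nabla v-\gamma^{-2}v\,\nabla\gamma$, valid a.e., this yields
\[
\Big\langle\tfrac1\gamma\Lambda_\gamma f,g\Big\rangle=\int_D\gamma\,\nabla u\cdot\nabla\Big(\frac{v}{\gamma}\Big)=\int_D\nabla u\cdot\nabla v-\int_D\frac{\nabla\gamma}{\gamma}\cdot\nabla u\,v,
\]
each integral being absolutely convergent since $\nabla\gamma/\gamma\in L^\infty(D)$ and $\nabla u,\nabla v,v\in L^2(D)$.

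Next I would handle $\int_{\partial D}\Lambda f\,g$. Since $v$ is the harmonic extension of $g$, the weak formulation of the conductivity-one problem gives $\langle\Lambda g,w|_{\partial D}\rangle=\int_D\nabla v\cdot\nabla w$ for every $w\in H^1(D)$; choosing $w=u$ (whose trace is $f$) produces $\int_D\nabla u\cdot\nabla v=\langle\Lambda g,f\rangle=\langle\Lambda f,g\rangle$, the last step by the symmetry of the DN map of a real conductivity. Hence $\int_{\partial D}\Lambda f\,g=\int_D\nabla u\cdot\nabla v$. Subtracting this from the previous display, the two copies of $\int_D\nabla u\cdot\nabla v$ cancel and we are left precisely with $-\int_D\tfrac{\nabla\gamma}{\gamma}\cdot\nabla u\,v$, which is \eqref{eq:idenderiv}.

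The computation itself is just Green's identity in variational form; the only place that deserves care is the opening step, namely that $1/\gamma$ is an admissible multiplier on $H^{1/2}(\partial D)$ so that $\tfrac1\gamma\Lambda_\gamma f$ is meaningful, and that $v/\gamma$ is a genuine $H^1(D)$ test function with the expected trace and gradient. Both facts rest on $\gamma\in C^{0,1}(\overline{D})$, $\gamma\geq\gamma_0>0$, and the Lipschitz regularity of $\partial D$, all of which are available here.
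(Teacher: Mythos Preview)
Your proof is correct and follows essentially the same route as the paper's: test the weak formulation of \eqref{pb:BVP} against $v/\gamma$, expand via the Leibniz rule, and identify $\int_D\nabla u\cdot\nabla v$ with $\langle\Lambda f,g\rangle$. The only cosmetic difference is in this last identification: the paper introduces the harmonic extension $w$ of $f$ and writes $\int_D\nabla u\cdot\nabla v=\int_D\nabla(u-w)\cdot\nabla v+\int_D\nabla w\cdot\nabla v$, noting the first piece vanishes since $v$ is harmonic and $u-w\in H^1_0(D)$, whereas you reach the same conclusion by testing $\Lambda g$ against $u$ and invoking the symmetry of $\Lambda$. Your extra care about $1/\gamma$ as a multiplier on $H^{1/2}(\partial D)$ and about $v/\gamma\in H^1(D)$ is a welcome addition that the paper leaves implicit.
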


\begin{proof}
By the definition of the DN map,
\[\int_{\partial D} \frac{1}{\gamma}\Lambda_\gamma f \,  g \enspace = \, 
\int_D \gamma \nabla u \cdot \nabla \Big( \frac{v}{\gamma} \Big) \]
since $v/\gamma \in H^1(D)$ and $v|_{\partial D} = g$. Furthermore, we 
have
\[\int_D \gamma \nabla u \cdot \nabla \Big( \frac{v}{\gamma} \Big) 
\enspace = \int_D \nabla u \cdot \nabla v \, -\int_D \frac{\nabla\gamma}
{\gamma}\cdot\nabla u \, v  \]
by the Leibniz rule.
Consider $w \in H^1(D)$ the harmonic extension in $D$ of $f$. Adding and 
subtracting $\nabla w$ as appropriate we see that
\[\int_D \nabla u \cdot \nabla v \enspace = \, \int_D \nabla (u - w) \cdot 
\nabla v \, + \, \int_{\partial D} \Lambda f \, g \]
by the definition of $\Lambda$. Since $v$ is harmonic in $D$, the first 
term in the right hand side of the previous identity vanishes. Eventually, 
the integral identity we want to prove follows from the former 
considerations.
\end{proof}

With this identity at hand, we just plug in $a_{M,N}$ as in \eqref{eq:v_N} in order to obtain an asymptotic equality similar to the one in the lemma \ref{lem:Br1}.

\begin{lemma} \label{lem:leadingDEV}\sl Assume $\gamma\in C^{1,1}(\overline{D})$ and $\phi \in C^{1,1}(\R^{d - 1})$. Then we have
\begin{align*}
\int_{\tilde{D}} &\frac{\nabla\gamma(F(x))}{\gamma(F(x))}\cdot \big( 
\nabla F^{-1}(F(x))^t \nabla a_{M,N}(x) \big) \overline{a_{M,N}(x)} \, dx \\
& = M^{1-d} \frac{1}{2} \frac{\nabla\gamma(P)}{\gamma(P)} \cdot 
\big( \nabla F^{-1}(P)^t (i\xi - e_d) \big) \int_{\R^{d-1}}\eta(|
x^{\prime}|)^2\, dx^{\prime} \\
&\quad  +\mathcal{O}\Big(M^{2 - d} N^{-1} + M^{-d} \Big).
\end{align*}
The constant implicit in $\mathcal{O}$ depends on $d$, a lower bound on $\gamma_0$ and on upper bounds for $\| \gamma \|_{C^{1,1}(\overline{D})}$ and $\| \phi \|_{C^{1,1}(\R^{d - 1})}$.
\end{lemma}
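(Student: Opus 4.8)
The plan is to follow the scheme of the proof of the lemma~\ref{lem:Br1}: substitute the explicit form \eqref{eq:v_N} of $a_{M,N}$, exploit that near the origin $\tilde{D}$ coincides with the half-space $\{x_d>0\}$ (so that, for $M$ large enough, the whole support of $\chi(M\,\cdot)$ meets $\tilde{D}$ only in a neighbourhood of this half-space), use the resulting exponential decay $e^{N(i\xi-e_d)\cdot x}\,\overline{e^{N(i\xi-e_d)\cdot x}}=e^{-2Nx_d}$, rescale $x=z/M$, and Taylor-expand the smooth coefficients at $P=F(0)$.

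First I would compute
\[
\nabla a_{M,N}(x)\,\overline{a_{M,N}(x)}=\Big(M\,(\nabla\chi)(Mx)\,\chi(Mx)+N\,(i\xi-e_d)\,\chi(Mx)^2\Big)\,e^{-2Nx_d},
\]
which splits the integral on the left-hand side of the statement into a ``gradient part'' (carrying $\nabla\chi$) and an ``exponential part'' (carrying $N(i\xi-e_d)$). In the gradient part no expansion is needed: the coefficient $\tfrac{\nabla\gamma(F(x))}{\gamma(F(x))}\cdot\nabla F^{-1}(F(x))^t$ is a bounded vector field and $(\nabla\chi)(Mx)\chi(Mx)$ is bounded with support in $\{|Mx|\lesssim 1\}$, so the substitution $z=Mx$ (Jacobian $M^{-d}$) together with $\int_0^\infty e^{-2Nz_d/M}\,dz_d=M/(2N)$ gives directly that the gradient part is $\mathcal{O}(M^{2-d}N^{-1})$. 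This is the one point where the orientation of $\tilde{D}$ — through the decay $e^{-2Nx_d}$ — is used in an essential way: without it the bound would only be $\mathcal{O}(M^{1-d})$.

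For the exponential part I would set $b(x)=\tfrac{\nabla\gamma(F(x))}{\gamma(F(x))}\cdot\big(\nabla F^{-1}(F(x))^t(i\xi-e_d)\big)$ and note that, because $\gamma\in C^{1,1}(\overline{D})$ and $\phi\in C^{1,1}(\R^{d-1})$ (hence $\nabla F^{-1}\circ F$ is Lipschitz and $\gamma\circ F$ is $C^{1,1}$ and bounded below by $\gamma_0$), the function $b$ is Lipschitz near $0$ with constant controlled by $d$, $\gamma_0$, $\|\gamma\|_{C^{1,1}(\overline{D})}$ and $\|\phi\|_{C^{1,1}(\R^{d-1})}$; thus $b(x)=b(0)+\mathcal{O}(|x|)$ with $b(0)=\tfrac{\nabla\gamma(P)}{\gamma(P)}\cdot(\nabla F^{-1}(P)^t(i\xi-e_d))$ since $F(0)=P$. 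On the support of $\chi(Mx)$ one has $|x|\lesssim M^{-1}$, and since $\int_{\tilde{D}}\chi(Mx)^2e^{-2Nx_d}\,dx=\mathcal{O}(M^{1-d}N^{-1})$ the $\mathcal{O}(|x|)$ remainder contributes $N\cdot\mathcal{O}(M^{-1})\cdot\mathcal{O}(M^{1-d}N^{-1})=\mathcal{O}(M^{-d})$. For the main term $N\,b(0)\int_{\tilde{D}}\chi(Mx)^2e^{-2Nx_d}\,dx$, the substitution $z=Mx$ gives the factorization
\[
N\,b(0)\,M^{-d}\Big(\int_{\R^{d-1}}\eta(|z'|)^2\,dz'\Big)\Big(\int_0^\infty\eta(z_d)^2e^{-2Nz_d/M}\,dz_d\Big),
\]
and since $\eta\equiv 1$ on $[0,1/2]$ the $z_d$-integral equals $\int_0^\infty e^{-2Nz_d/M}\,dz_d+\mathcal{O}\big(\tfrac{M}{N}e^{-N/M}\big)=\tfrac{M}{2N}+\mathcal{O}\big(\tfrac{M}{N}e^{-N/M}\big)$; multiplying by $NM^{-d}$, the exponentially small remainder becomes $\mathcal{O}(M^{1-d}e^{-N/M})$, which is dominated by $\mathcal{O}(M^{-d})$ for the choices of $M$ in use. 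Collecting the three contributions produces exactly the claimed asymptotic identity.

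I do not expect a real obstacle: structurally this is the analogue of the lemma~\ref{lem:Br1} with one of the two copies of $a_{M,N}$ left undifferentiated, which explains why the leading order drops from $NM^{1-d}$ to $\tfrac12 M^{1-d}$. The only thing requiring care is the bookkeeping of the error terms — in particular that the gradient part is genuinely $\mathcal{O}(M^{2-d}N^{-1})$ and not merely $\mathcal{O}(M^{1-d})$, and that every implicit constant depends only on $d$, $\gamma_0$, $\|\gamma\|_{C^{1,1}(\overline{D})}$ and $\|\phi\|_{C^{1,1}(\R^{d-1})}$, as stated.
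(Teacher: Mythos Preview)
Your proof is correct and follows essentially the same approach as the paper's: split the integral via $\nabla a_{M,N}\,\overline{a_{M,N}} = \big(M(\nabla\chi)(Mx)\chi(Mx)+N(i\xi-e_d)\chi(Mx)^2\big)e^{-2Nx_d}$, bound the $\nabla\chi$ piece by $\mathcal{O}(M^{2-d}N^{-1})$, and in the main piece freeze the Lipschitz coefficient at $x=0$ (the paper does this in two separate steps, for $\nabla F^{-1}\circ F$ and for $\nabla\gamma/\gamma$, but your combined bound $b(x)=b(0)+\mathcal{O}(|x|)$ is equivalent). One small bookkeeping correction: the exponential remainder $M^{1-d}e^{-N/M}$ is \emph{not} dominated by $M^{-d}$ for all $N\geq M\geq 1$ (take $M=N$), but since $t\leq e^t$ gives $e^{-N/M}\leq M/N$, it is always dominated by $M^{2-d}N^{-1}$, which is already in the stated error---so the conclusion stands without needing to invoke the specific choice $M=N^{1/2}$.
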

\begin{proof} By the definition of $a_{M,N}$, the term to be computed equals
\begin{align*}
N \int_{\tilde{D}} & \frac{\nabla\gamma(F(x))}{\gamma(F(x))}\cdot \big( 
\nabla F^{-1}(F(x))^{t}(i\xi-e_d) \big) \chi(M x)^2 e^{-2Nx_d} dx\\
& + M\int_{\tilde{D}}\frac{\nabla\gamma(F(x))}
{\gamma(F(x))}\cdot \big( \nabla F^{-1}
(F(x))^{t}\nabla\chi(Mx) \big) \chi(M x) e^{-2Nx_d} dx.
\end{align*}
The last of these two addends is $\mathcal{O} (M^{2-d} N^{-1})$. The first of them is analysed according to the following decomposition
\begin{align*}
\nabla F^{-1}(F(x)) \frac{\nabla\gamma(F(x))}{\gamma(F(x))} &= \nabla 
F^{-1}(P) \frac{\nabla\gamma(P)}{\gamma(P)} + \big( \nabla F^{-1}(F(x)) - 
\nabla F^{-1}(P) \big) \frac{\nabla\gamma(P)}{\gamma(P)} \\
& \quad + \nabla F^{-1}(F(x)) \Big( \frac{\nabla\gamma(F(x))}{\gamma(F(x))} - \frac{\nabla\gamma(P)}{\gamma(P)} \Big).
\end{align*}
The first term yields
\[  N \frac{\nabla\gamma(P)}{\gamma(P)} \cdot \big( \nabla F^{-1}(P)^t 
(i\xi - e_d) \big)\int_{\tilde{D}} \chi(M x)^2 e^{-2Nx_d} dx, \]
which is easily computed by using that
\begin{align*}
\int_{\tilde{D}} \chi(M &x)^2 e^{-2Nx_d} dx \\
& = M^{1 - d} \int_{\R^{d-1}}\eta(|x^{\prime}|)^2\, dx^{\prime} \Big( \frac{1}{2N} + \int_0^\infty (\eta(Mx_d) - 1) e^{-2Nx_d} \,dx_d  \Big).
\end{align*}
This already provides the leading term in the asymptotic identity stated in the lemma. We are now left with the second and third terms on the previous decomposition. For the second of them, we just need to use that 
\[\big| \nabla F^{-1}(F(x)) - \nabla F^{-1}(F(0)) \big| \lesssim |
\nabla\phi(x^{\prime}+p^{\prime})-\nabla\phi(p^{\prime})|, \]
which yields a term of the order
$$\mathcal{O}\Big(\int_{|x^{\prime}|\leq M^{-1}}| 
\nabla\phi(x^{\prime}+p^{\prime})-\nabla\phi(p^{\prime})|\, 
dx^{\prime}\Big) = \mathcal{O}(M^{-d}).$$
Eventually, for the third term arising in the decomposition, we use that
\[|\nabla \gamma (F(x)) - \nabla \gamma (F(0))| \lesssim |x|,\]
which yields a term of the order $\mathcal{O}(M^{-d})$. This ends the proof of this lemma.
\end{proof}

Now we need a lemma similar to the \ref{lem:Br2} but for harmonic functions. In this case, the result was proved by Brown and Salo (the lemma 2.5 in \cite{zbMATH05045048}):

\begin{lemma}[Brown--Salo \cite{zbMATH05045048}]\label{lem:BrSa}\sl Assume  
$\phi \in C^{1,1}(\R^{d - 1})$. 
Consider $a_{M,N}$ as in \eqref{eq:v_N} with $M = N^{1/2}$. Let 
$z_{M,N}$ solve the boundary value problem
\begin{equation*}
\left\{
\begin{aligned}
\nabla\cdot(A\nabla z_{M,N})&=-\nabla\cdot(A\nabla a_{M,N})  \enspace\  \text{in} \ 
\tilde{D},\\
z_{M,N}|_{\partial\tilde{D}}&=0,
\end{aligned}
\right.
\end{equation*}
where $A(x)=\nabla F^{-1}(F(x))\nabla F^{-1}(F(x))^{t}$.
Then
\begin{equation*}
\| z_{M,N}\|_{L^2(\tilde{D})} \lesssim N^{-\frac{1}{2}} M^\frac{1-d}{2} 
\end{equation*}
where the implicit constant depends on $d$ and on 
upper bounds for $\| \phi \|_{C^{1,1}(\R^{d - 1})}$.
\end{lemma}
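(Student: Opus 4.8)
The plan is an Aubin--Nitsche duality argument; this is the only route I see to the extra factor $N^{-1/2}$ over the plain energy estimate. First I would record the routine ingredients. Because $A=\nabla F^{-1}(F(\cdot))\nabla F^{-1}(F(\cdot))^t$ is symmetric, bounded and, since $|\det\nabla F^{-1}|=1$, uniformly elliptic on $\tilde D$ with constants controlled by $\|\phi\|_{C^{0,1}(\R^{d-1})}$, the Lax--Milgram theorem yields a unique $z_{M,N}\in H^1_0(\tilde D)$ with $\int_{\tilde D}A\nabla z_{M,N}\cdot\nabla\varphi=-\int_{\tilde D}A\nabla a_{M,N}\cdot\nabla\varphi$ for all $\varphi\in H^1_0(\tilde D)$. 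Testing against $\overline{z_{M,N}}$ only gives $\|\nabla z_{M,N}\|_{L^2(\tilde D)}\lesssim\|\nabla a_{M,N}\|_{L^2(\tilde D)}\lesssim N^{1/2}M^{(1-d)/2}$, and Poincar\'e then throws away the gain, so instead I would exploit two sharper elementary bounds on $a_{M,N}$: using $|e^{N(i\xi-e_d)\cdot x}|^2=e^{-2Nx_d}$, $\supp\chi(M\,\cdot)\subset\{|x'|\le M^{-1},\ 0\le x_d\le M^{-1}\}$ and $\int_0^\infty e^{-2Nx_d}\,dx_d=(2N)^{-1}$, one gets $\|a_{M,N}\|_{L^2(\tilde D)}\lesssim N^{-1/2}M^{(1-d)/2}$; and, since near $0$ the boundary $\partial\tilde D$ is the flat piece $\{x_d=0\}$ on which $a_{M,N}(x',0)=\eta(M|x'|)e^{iN\xi'\cdot x'}$ is a smooth bump whose Fourier transform concentrates, up to rapidly decreasing tails, within distance $\sim M=N^{1/2}$ of $N\xi'$ --- hence at frequencies of size $\sim N$, using $|\xi'|\neq0$ and $M\ll N$ --- one gets $\|a_{M,N}\|_{H^{-1/2}(\partial\tilde D)}\lesssim N^{-1/2}\|a_{M,N}\|_{L^2(\partial\tilde D)}\lesssim N^{-1/2}M^{(1-d)/2}$.

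For the duality step I would pick $\psi\in H^1_0(\tilde D)$ solving $\nabla\cdot(A\nabla\psi)=\overline{z_{M,N}}$. This is where the hypothesis $\phi\in C^{1,1}(\R^{d-1})$ is spent: it makes $\partial\tilde D$ of class $C^{1,1}$ and $A$ Lipschitz, so elliptic $H^2$ regularity gives $\psi\in H^2(\tilde D)$ with $\|\psi\|_{H^2(\tilde D)}\lesssim\|z_{M,N}\|_{L^2(\tilde D)}$. Testing the $\psi$-equation against $z_{M,N}$, using the symmetry of $A$, then testing the $z_{M,N}$-equation against $\psi$, and finally integrating by parts --- now legitimate because $A\nabla\psi\in H^1(\tilde D)$ --- one is led to
\[\|z_{M,N}\|_{L^2(\tilde D)}^2=-\int_{\tilde D}a_{M,N}\,\overline{z_{M,N}}\,dx+\int_{\partial\tilde D}a_{M,N}\,(A\nabla\psi)\cdot\nu\,dS.\]
The first term is at most $\|a_{M,N}\|_{L^2(\tilde D)}\|z_{M,N}\|_{L^2(\tilde D)}\lesssim N^{-1/2}M^{(1-d)/2}\|z_{M,N}\|_{L^2(\tilde D)}$. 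The second I would bound through the $H^{-1/2}(\partial\tilde D)$--$H^{1/2}(\partial\tilde D)$ pairing: since $A$ and $\nu$ are Lipschitz and $\psi\in H^2(\tilde D)$, the conormal trace satisfies $\|(A\nabla\psi)\cdot\nu\|_{H^{1/2}(\partial\tilde D)}\lesssim\|A\nabla\psi\|_{H^1(\tilde D)}\lesssim\|\psi\|_{H^2(\tilde D)}\lesssim\|z_{M,N}\|_{L^2(\tilde D)}$, so this term is also $\lesssim N^{-1/2}M^{(1-d)/2}\|z_{M,N}\|_{L^2(\tilde D)}$. Dividing by $\|z_{M,N}\|_{L^2(\tilde D)}$ then closes the argument.

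The step I expect to be the main obstacle is precisely the boundary integral $\int_{\partial\tilde D}a_{M,N}(A\nabla\psi)\cdot\nu$. Bounding it crudely by $\|a_{M,N}\|_{L^2(\partial\tilde D)}\|(A\nabla\psi)\cdot\nu\|_{L^2(\partial\tilde D)}\lesssim M^{(1-d)/2}\|z_{M,N}\|_{L^2(\tilde D)}$ loses a full power $N^{1/2}$ and merely reproduces the energy bound; the entire improvement hinges on pairing the oscillatory boundary trace of $a_{M,N}$ against the conormal trace of $\psi$ in the right fractional spaces, so that one simultaneously uses that $a_{M,N}|_{\partial\tilde D}$ is cheap in $H^{-1/2}$ (tangential oscillation $e^{iN\xi'\cdot x'}$ with $|\xi'|\neq0$, together with $M\ll N$) and that $(A\nabla\psi)\cdot\nu$ is controlled in $H^{1/2}$, which is exactly why $C^{1,1}$ regularity of $\phi$ --- hence of $\partial\tilde D$ and $A$ --- is indispensable and cannot be relaxed. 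One caveat: as written, the implicit constant also depends on the global $C^{1,1}$ character of $\partial\tilde D$; to bring it down to a dependence on $d$ and an upper bound for $\|\phi\|_{C^{1,1}(\R^{d-1})}$ only, one localizes the $H^2$-estimate to a fixed ball around $0$, which is harmless since $-\nabla\cdot(A\nabla a_{M,N})$ is supported there. Alternatively, this is Lemma~2.5 of \cite{zbMATH05045048}, which one may simply invoke.
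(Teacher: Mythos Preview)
The paper does not prove this lemma at all: it is simply quoted as Lemma~2.5 of Brown--Salo \cite{zbMATH05045048}, and you yourself note this option at the end of your proposal. So there is no ``paper's proof'' to compare against beyond the bare citation.

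Your Aubin--Nitsche argument is correct and gives a genuine self-contained proof. The key identity
\[
\|z_{M,N}\|_{L^2(\tilde D)}^2=-\int_{\tilde D}a_{M,N}\,\overline{z_{M,N}}\,dx+\int_{\partial\tilde D}a_{M,N}\,(A\nabla\psi)\cdot\nu\,dS
\]
is derived correctly, and your two estimates $\|a_{M,N}\|_{L^2(\tilde D)}\lesssim N^{-1/2}M^{(1-d)/2}$ and $\|a_{M,N}\|_{H^{-1/2}(\partial\tilde D)}\lesssim N^{-1/2}M^{(1-d)/2}$ are exactly what is needed; the second one is where $|\xi'|\neq 0$ and $M=N^{1/2}\ll N$ enter, as you say. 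The use of $C^{1,1}$ regularity of $\phi$ to obtain $\psi\in H^2(\tilde D)$ and hence $(A\nabla\psi)\cdot\nu\in H^{1/2}(\partial\tilde D)$ with norm $\lesssim\|z_{M,N}\|_{L^2(\tilde D)}$ is the right way to close the loop, and it pinpoints precisely why the hypothesis cannot be weakened in this scheme. Your caveat about localizing the $H^2$ estimate near the origin so that the constant depends only on $d$ and $\|\phi\|_{C^{1,1}}$ is well taken and easily handled, since $a_{M,N}$ is supported in a small ball about $0$ where $\partial\tilde D$ is flat.
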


For the choice $M = N^{1/2}$,
we consider the functions $u_N$ and $v_N$ defined by
\begin{align*}
F^\ast u_N &= M^\frac{d - 1}{2} C^\prime_P (a_{M,N} + w_{M,N} )\\
F^\ast v_N &= M^\frac{d - 1}{2} C^\prime_P (a_{M,N} + z_{M,N} )
\end{align*}
with $C^\prime_P = \sqrt{2} (1+|\nabla\phi(p^{\prime})|^2)^{-1/4} \big(\int_{\R^{d-1}}\eta(|
x^{\prime}|)^2 dx^{\prime} \big)^{-1/2}$. Let $f_N$ denote $u_N|_{\partial 
D} = v_N|_{\partial D}$ and plug them in the right hand side of 
\eqref{eq:idenderiv}
\begin{equation*}
\int_D \frac{\nabla\gamma}{\gamma}\cdot\nabla u_N \, \overline{v_N} \enspace = \int_{\tilde{D}} \frac{\nabla\gamma (F(x))}{\gamma(F(x))}\cdot \big( \nabla F^{-1}(F(x))^t \nabla F^\ast u_N (x) \big) \overline{F^\ast v_N (x)} \, dx.
\end{equation*}
By the identity \eqref{eq:idenderiv} and the lemma \ref{lem:leadingDEV}, we have that
\begin{align*}
\int_{\partial D} \Big( \frac{1}{\gamma}\Lambda_\gamma-\Lambda \Big)f_N \,  \overline{f_N} \enspace &= - \frac{\nabla\gamma(P)}{\gamma(P)} \cdot 
\frac{\nabla F^{-1}(P)^t (i\xi - e_d)}{(1+|\nabla\phi(p^{\prime})|^2)^{1/2}} + \mathcal{O} (M^{-1}) \\
& \quad + \mathcal{O} \Big( M^{d - 1} \| \delta \nabla a_{M,N} \|_{L^2(\tilde{D})} \| z_{M,N} / \delta \|_{L^2(\tilde{D})} \Big) \\
& \quad + \mathcal{O} \Big( M^{d - 1} \| \nabla w_{M,N} \|_{L^2(\tilde{D})} \| a_{M,N} + z_{M,N} \|_{L^2(\tilde{D})} \Big),
\end{align*}
where $\delta (x) $ denotes the distance between $x$ and $\partial \tilde{D}$. As Brown did in \cite{zbMATH01731190}, we use Hardy's inequality to bound
\[ \| z_{M,N} / \delta \|_{L^2(\tilde{D})} \lesssim \| \nabla z_{M,N} \|_{L^2(\tilde{D})}. \]
The terms $\| \nabla z_{M,N} \|_{L^2(\tilde{D})}$ and $\| \nabla w_{M,N} \|_{L^2(\tilde{D})}$ can be bounded according to the lemma \ref{lem:Br2}. For $\| z_{M,N} \|_{L^2(\tilde{D})}$ we will use the lemma \ref{lem:BrSa}. The remaining terms will be bounded as follows:
\begin{lemma} \sl
Let the function
$\phi \in C^{1,1}(\R^{d - 1})$ and $M = N^{1/2}$. Then, we have that
$\| a_{M,N} \|_{L^2(\tilde{D})} = \mathcal{O} (M^{(1 - d)/2} N^{-1/2})$ and $\| \delta \nabla a_{M,N} \|_{L^2(\tilde{D})} = \mathcal{O} (M^{(1 - d)/2} N^{-1/2})$.
\end{lemma}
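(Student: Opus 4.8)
The plan is to estimate the two $L^2$-norms directly from the explicit formula $a_{M,N}(x) = \chi(Mx) e^{N(i\xi - e_d)\cdot x}$, exploiting that on $\tilde D$ we have $x_d > \tilde\phi(x')$ where $\tilde\phi$ vanishes to first order at the origin (since $\tilde\phi(0) = 0$ and $\nabla\tilde\phi(0) = 0$, as the graph has been flattened by $F$), so that $\delta(x)$ is comparable to the vertical distance $x_d - \tilde\phi(x') \le x_d$ plus lower-order corrections. First I would record that $|a_{M,N}(x)|^2 = \chi(Mx)^2 e^{-2Nx_d}$, so that
\[
\| a_{M,N} \|_{L^2(\tilde D)}^2 = \int_{\R^{d-1}} \eta(M|x'|)^2 \Big( \int_{\tilde\phi(x')}^{\infty} \eta(Mx_d)^2 e^{-2Nx_d}\, dx_d \Big) dx'.
\]
The inner integral is bounded by $\int_0^\infty e^{-2Nx_d}\, dx_d = (2N)^{-1}$ up to a harmless constant coming from $\tilde\phi(x') = \mathcal{O}(M^{-1})\cdot\mathcal O(M^{-1})$ on the support, and the $x'$-integral is over $|x'| \lesssim M^{-1}$, contributing a factor $M^{1-d}$. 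This gives $\| a_{M,N} \|_{L^2(\tilde D)}^2 = \mathcal O(M^{1-d} N^{-1})$, i.e. the claimed $\mathcal O(M^{(1-d)/2} N^{-1/2})$.

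Next I would turn to $\| \delta \nabla a_{M,N} \|_{L^2(\tilde D)}$. Computing the gradient, $\nabla a_{M,N}(x) = \big( M (\nabla\chi)(Mx) + N(i\xi - e_d)\chi(Mx) \big) e^{N(i\xi - e_d)\cdot x}$, so $|\nabla a_{M,N}(x)| \lesssim (M + N)|e^{N(i\xi-e_d)\cdot x}| = (M+N) e^{-Nx_d}$ on the support; since $M = N^{1/2} \le N$, this is $\lesssim N e^{-Nx_d}$. The key gain is the factor $\delta(x)$: using $\delta(x) \lesssim x_d$ on $\tilde D$ (here is where the $C^{1,1}$ regularity and $\nabla\tilde\phi(0)=0$ enter, guaranteeing $\tilde\phi(x') = \mathcal O(|x'|^2)$, which is $\mathcal O(M^{-2}) = \mathcal O(N^{-1})$ on the support — negligible compared to the relevant scale), we get
\[
\| \delta \nabla a_{M,N} \|_{L^2(\tilde D)}^2 \lesssim N^2 \int_{|x'| \lesssim M^{-1}} \int_0^\infty x_d^2\, e^{-2Nx_d}\, dx_d\, dx' \lesssim N^2 \cdot M^{1-d} \cdot N^{-3} = M^{1-d} N^{-1},
\]
since $\int_0^\infty x_d^2 e^{-2Nx_d}\, dx_d \sim N^{-3}$. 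Taking square roots yields the claimed bound.

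The main obstacle — and the reason $C^{1,1}$ is invoked rather than merely $C^1$ — is the comparison $\delta(x) \lesssim x_d$ (or more precisely controlling how much $\delta$ can exceed the naive vertical distance near the flattened boundary), together with making sure the contribution of $\tilde\phi(x')$ to the lower endpoint of the $x_d$-integral is genuinely lower-order. With $\nabla\tilde\phi(0)=0$ and $\tilde\phi \in C^{1,1}$ one has $|\tilde\phi(x')| \lesssim |x'|^2 \lesssim M^{-2}$ on the support of $\chi(Mx)$, which is $\ll N^{-1}$ only marginally; one should check that replacing the lower limit $\tilde\phi(x')$ by $0$ costs at most a multiplicative constant in each integral, which follows because $e^{-2N\tilde\phi(x')}$ stays bounded (indeed $\tilde\phi$ could be slightly negative, but $|N\tilde\phi(x')| \lesssim N M^{-2} = \mathcal O(1)$). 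Everything else is the routine Gaussian-moment computation $\int_0^\infty x_d^k e^{-2Nx_d}\, dx_d = \mathcal O(N^{-k-1})$ combined with the $M^{1-d}$ volume factor from the transverse variables.
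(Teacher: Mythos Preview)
Your proof is correct and follows the same route as the paper's: bound $\delta(x)$ by $x_d$, split $\nabla a_{M,N}$ into the $N$- and $M$-terms, and use $\int_0^\infty x_d^k e^{-2Nx_d}\,dx_d = \mathcal O(N^{-k-1})$ together with the $M^{1-d}$ transverse volume. One simplification worth noting: since $F$ flattens the boundary \emph{exactly}, near the origin $\tilde D$ coincides with $\{x_d > 0\}$, so your auxiliary function $\tilde\phi$ is identically zero and the inequality $\delta(x) \le x_d$ is immediate (the point $(x',0)$ lies on $\partial\tilde D$); thus the $C^{1,1}$ hypothesis plays no role in this particular estimate and your ``main obstacle'' disappears.
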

\begin{proof}
We only consider $\| \delta \nabla a_{M,N} \|_{L^2(\tilde{D})}$, the other is a straightforward computation. It is enough to note that
\begin{align*}
\| \delta \nabla a_{M,N} \|_{L^2(\tilde{D})} &\lesssim N \Bigg( \int_{\tilde{D}} \chi(Mx)^2 e^{-2Nx_d} x_d^2 \, dx  \Bigg)^{1/2} \\
& \quad + M \Bigg( \int_{\tilde{D}} | \nabla \chi(Mx)| \chi(Mx) e^{-2Nx_d} x_d^2 \, dx  \Bigg)^{1/2}
\end{align*}
and estimate the first of these integrals, which is the one of highest order.
\end{proof}
These considerations, together with \eqref{id:noisy_data}, yield the 
asymptotic equality
\begin{equation}
\begin{aligned}
\mathcal{N}_\gamma (f_N, \overline{f_N}/\gamma) - \int_{\partial D} 
\Lambda f_N \,\overline{f_N} \, &= - \frac{\nabla\gamma(P)}{\gamma(P)} 
\cdot 
\frac{\nabla F^{-1}(P)^t (i\xi - e_d)}{(1+|\nabla\phi(p^{\prime})|
^2)^{1/2}} \\
&\quad + \sum_{\alpha \in \N^2} (f_N|e_{\alpha_1}) (\overline{f_N}/ 
\gamma|e_{\alpha_2}) X_\alpha + \mathcal{O} (N^{-\frac{1}{2}}).
\end{aligned}
\label{id:DEVwithNOISE}
\end{equation}

As in the section \ref{sec:recoveryconduct}, we need to filter out the 
noise in \eqref{id:DEVwithNOISE} to be able to recover $\partial_\nu 
\gamma|_{\partial D}$. The situation here is a bit more involved, since
\[\| f_N \|_{L^2(\partial D)} = \mathcal{O} (1).\]
However, averaging in the parameter $M = N^{1/2}$ we are able to get rid of the noise.
\begin{lemma}\label{lem:filtering}\sl We have that, for $T > 0$,
\[\mathbb{E} \bigg|\frac{1}{T}\int_T^{2T} \sum_{\alpha \in \N^2} (f_{t^2}|e_{\alpha_1}) 
(\overline{f_{t^2}}/ \gamma|e_{\alpha_2}) X_\alpha \, dt \bigg|^2 \leq \frac{C}{T^{2/3}}.\]
The constant $C > 0$ depends on $d$, a lower bound for $\gamma_0$ and upper bounds for $\| \gamma \|_{C^{1,1}(\overline{D})}$ and  $\| \phi \|_{C^{1,1}(\R^{d - 1})}$.
\end{lemma}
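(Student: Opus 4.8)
The plan is to compute the second moment exactly, use the orthogonality built into the family $\{X_\alpha\}$ to collapse it to a deterministic double oscillatory integral over $[T,2T]^2$, and then estimate that integral. Write
\[
S(t):=\sum_{\alpha\in\N^2}(f_{t^2}|e_{\alpha_1})\,(\overline{f_{t^2}}/\gamma\,|\,e_{\alpha_2})\,X_\alpha ,
\]
so the quantity to bound is $\E\big|\tfrac1T\int_T^{2T}S(t)\,dt\big|^2$. Since $\E|S(t)|^2=\|f_{t^2}\|_{L^2(\partial D)}^2\,\|\overline{f_{t^2}}/\gamma\|_{L^2(\partial D)}^2=\mathcal O(1)$ (by the lemma \ref{lem:error}, the estimate $\|f_N\|_{L^2(\partial D)}=\mathcal O(1)$ recalled above, and $\gamma\ge\gamma_0$), Fubini lets one interchange $\E$ with the $t$- and $s$-integrals, giving
\[
\E\Big|\tfrac1T\int_T^{2T}S(t)\,dt\Big|^2=\tfrac1{T^2}\int_T^{2T}\!\!\int_T^{2T}\E\big[S(t)\overline{S(s)}\big]\,dt\,ds .
\]
Expanding $S(t)\overline{S(s)}$ and using independence together with $\E(X_\alpha\overline{X_\alpha})=1$, $\E(X_\alpha X_\alpha)=0$ (hence $\E(X_\alpha\overline{X_\beta})=\delta_{\alpha\beta}$), the double sum collapses, and Parseval's identity for $\{e_n\}$ turns it into a product of two $L^2(\partial D)$ inner products,
\[
\E\big[S(t)\overline{S(s)}\big]=(f_{t^2}\,|\,f_{s^2})\;\big(\overline{f_{t^2}}/\gamma\,\big|\,\overline{f_{s^2}}/\gamma\big).
\]
Thus the lemma reduces to estimating these two scalar products for $t,s\in[T,2T]$.

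For this I would localize and flatten. Because $w_{M,N}|_{\partial\tilde D}=0$, on the coordinate patch around $P$ the pull-back of $f_N$ is $M^{(d-1)/2}C^\prime_P\,\eta(M|x'|)\,e^{iN\xi'\cdot x'}$ with $M=N^{1/2}$, supported in $\{|x'|\le 1/M\}$; hence for $T$ large each $f_{t^2}$ with $t\in[T,2T]$ is supported inside the patch, and, with $\psi(x'):=\sqrt{1+|\nabla\phi(x'+p')|^2}$ the surface density,
\[
(f_{t^2}\,|\,f_{s^2})=(C^\prime_P)^2\,(ts)^{\frac{d-1}2}\!\!\int_{\R^{d-1}}\!\!\eta(t|x'|)\,\eta(s|x'|)\,\psi(x')\,e^{i(t^2-s^2)\xi'\cdot x'}\,dx',
\]
and identically for $\big(\overline{f_{t^2}}/\gamma\,\big|\,\overline{f_{s^2}}/\gamma\big)$ with $\psi(x')$ replaced by $\psi(x')\,\gamma(F(x',0))^{-2}$. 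For these integrals I would combine two bounds: the trivial one, using that $\eta(t|\cdot|)\eta(s|\cdot|)$ lives where $|x'|\lesssim(t+s)^{-1}$ and the amplitude is bounded, which gives $\mathcal O(1)$; and a \emph{single} integration by parts along $\xi'/|\xi'|$, legitimate because $\xi'\neq0$ (from the choice of $\xi$) and because $\phi\in C^{1,1}$ and $\gamma\in C^{1,1}$ make both amplitudes Lipschitz, hence $W^{1,1}$. Differentiating the cutoffs costs a factor $\sim t+s$, so after noting $|t^2-s^2|=|t-s|(t+s)$, the gain $|t^2-s^2|^{-1}$ against the prefactor $(ts)^{(d-1)/2}\sim(t+s)^{d-1}$ and the support volume $\sim(t+s)^{1-d}$ leaves $|(f_{t^2}|f_{s^2})|\lesssim|t-s|^{-1}$ on $[T,2T]$, and likewise for the other factor.

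Inserting $|\E[S(t)\overline{S(s)}]|\lesssim\min(1,|t-s|^{-1})^2$ into the double integral, for each $t$ the $s$-integral is at most $\int_{\R}\min(1,u^{-2})\,du<\infty$, so
\[
\E\Big|\tfrac1T\int_T^{2T}S(t)\,dt\Big|^2\lesssim\tfrac1{T^2}\int_T^{2T}\mathcal O(1)\,ds=\mathcal O(T^{-1})\le C\,T^{-2/3},
\]
with $C$ depending only on the quantities listed in the statement. The main obstacle is the integration-by-parts step: the surface density and $1/\gamma^2$ are merely Lipschitz, so only one integration by parts is available, yielding exactly the single gain $|t-s|^{-1}$; this is precisely why $\phi\in C^{1,1}$ is needed, and for $\phi\in C^{1,\theta}$ with $\theta<1$ the amplitudes are only Hölder, the oscillatory gain weakens, and the $s$-integral of the covariance fails to stay bounded uniformly in $T$ — matching the assertion that the method breaks down for rougher boundaries. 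A minor point still to verify is that the supports of all $f_{t^2}$, $t\in[T,2T]$, lie inside a single chart for the range of $T$ at hand (and that the small-$T$ range is absorbed into $C$).
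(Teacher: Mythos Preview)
Your argument is correct and shares the paper's skeleton: expand the second moment, use $\E(X_\alpha\overline{X_\beta})=\delta_{\alpha\beta}$ and Parseval to reduce to the double integral of $(f_{t^2}|f_{s^2})(f_{s^2}/\gamma|f_{t^2}/\gamma)$ over $[T,2T]^2$, then combine the trivial $\mathcal{O}(1)$ bound with one integration by parts (available precisely because $\phi,\gamma\in C^{1,1}$) yielding $\mathcal{O}(|t-s|^{-1})$ on $[T,2T]$. The final synthesis, however, differs. The paper splits the square into a diagonal strip $|t-s|\le S$ (contributing $S/T$ via the trivial bound) and its complement, on which it uses only the \emph{worst} off-diagonal value of the oscillatory estimate, namely $(t+s+1)/|t^2-s^2|\lesssim (ST)^{-1}$, getting a contribution $S^{-2}$; optimizing $S=T^{1/3}$ gives exactly $T^{-2/3}$. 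Your direct use of the pointwise bound $\min(1,|t-s|^{-2})$ together with $\int_{\R}\min(1,u^{-2})\,du<\infty$ is cleaner and in fact yields the sharper rate $\mathcal{O}(T^{-1})$, which of course implies the stated $T^{-2/3}$. Both arguments hinge on the same point you single out: only one integration by parts is available, and this is where the $C^{1,1}$ hypothesis on $\phi$ (and $\gamma$) enters.
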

\begin{proof}
Start by noting that
\begin{align*}
\mathbb{E} \bigg|\frac{1}{T}\int_T^{2T} \sum_{\alpha \in \N^2} (f_{t^2}|e_{\alpha_1}) 
(\overline{f_{t^2}}/ \gamma|e_{\alpha_2}) X_\alpha \, dt \bigg|^2
= \frac{1}{T^2}\int_{Q_T} (f_{t^2}|f_{s^2}) 
(f_{s^2}/ \gamma|f_{t^2}/ \gamma) \, d(t,s)
\end{align*}
where $Q_T = [T,2T] \times [T, 2T]$. Consider $S \in (0 , T/2)$ to be 
chosen later and set
\begin{align*}
D(S) &= \{ (t,s) \in Q_T : t - S \leq s \leq t + S \}, \\
L(S) &= \{ (t,s) \in Q_T : T \leq s < t - S \}, \\
R(S) &= \{ (t,s) \in Q_T : t + S < s \leq 2T \}.
\end{align*}
A direct computation shows that
$|(f_{t^2}|f_{s^2})| + |(f_{s^2}/ \gamma|f_{t^2}/ \gamma)| \lesssim 1$, and consequently,
\[\frac{1}{T^2}\int_{D(S)} (f_{t^2}|f_{s^2}) 
(f_{s^2}/ \gamma|f_{t^2}/ \gamma) \, d(t,s) \lesssim \frac{S}{T}.\]
On the other hand, using that
\begin{equation}
(-i\xi' \cdot \nabla) e^{i(t^2 - s^2) \xi' \cdot x'} = |\xi'|^2 (t^2 - s^2) e^{i(t^2 - s^2) \xi' \cdot x'},
\label{id:oscillations}
\end{equation}
where $\xi = (\xi', \xi_d)$, integrating by parts and using the regularity for $\phi$ and $\gamma$ we can see that, whenever $t \neq s$,
\begin{equation}
|(f_{t^2}|f_{s^2})| + |(f_{s^2}/ \gamma|f_{t^2}/ \gamma)| \lesssim \frac{t + s + 1}{|t^2 
- s^2|}.
\label{es:oscillations+int_by_parts}
\end{equation}
Now we show how to perform the integration by parts for $|(f_{t^2}|f_{s^2})|$---the same argument is valid for $|(f_{s^2}/ \gamma|f_{t^2}/ \gamma)|$:
\begin{align*}
|(f_{t^2}|f_{s^2})| &= (C'_P)^2 t^\frac{d - 1}{2} s^\frac{d - 1}{2} \Big| \int_{\R^{d- 1}} \eta(t|x'|) \eta(s|x'|) e^{i(t^2 - s^2)\xi' \cdot x'} (1 + |\nabla \phi (x')|)^{1/2} \, dx' \Big| \\
& \lesssim \frac{t^\frac{d - 1}{2} s^\frac{d - 1}{2}}{|t^2 - s^2|} \int_{\R^{d- 1}} \big| \nabla \big( \eta(t|x'|) \eta(s|x'|) (1 + |\nabla \phi (x')|)^{1/2} \big) \big| \, dx'.
\end{align*}
In the last inequality, we have used identity \eqref{id:oscillations} and integrated by parts. The fact that $|\xi'| \neq 0$ is required justify the integration by parts. Eventually, to obtain \eqref{es:oscillations+int_by_parts} we just apply Leibniz rule and H\"older's inequality. As a consequence of \eqref{es:oscillations+int_by_parts}, we can bound
\[\frac{1}{T^2}\int_{R(S)} (f_{t^2}|f_{s^2}) 
(f_{s^2}/ \gamma|f_{t^2}/ \gamma) \, d(t,s) \lesssim \frac{1}{S^2}, \]
where we have used that the area of $R(S)$ is $\mathcal{O}(T^2)$ and that if $(t , s) \in R(S)$, then
$$s^2 - t^2 > (t+S)^2 - t^2 = 2St + S^2 > St \geq ST.$$
The same bounds hold for the integration on $L(S)$. Finally, we choose $S$ to satisfy $S T^{-1} = S^{-2}$ and get bound claim in the statement.
\end{proof}

As a consequence of the first part of the lemma \ref{lem:convergence} (with $\lambda_N = N^{-\theta}$ and $\theta \in (0, 1)$) we have that
\begin{equation}
\frac{1}{T_N}\int_{T_N}^{2T_N} \sum_{\alpha \in \N^2} (f_{t^2}|e_{\alpha_1}) 
(\overline{f_{t^2}}/ \gamma|e_{\alpha_2}) X_\alpha \, dt \longrightarrow 0
\label{lim:averaging}
\end{equation}
almost surely as $N \in \N \setminus \{ 0 \}$. Recall that $T_N = N^{3+3\theta/2}$.

\begin{proof}[Proof of the theorem \ref{th:part_gamma}]
Consider $\nu_P$ the unit normal vector to $\partial D$ at $P$ pointing outward and $\tau_P$ any unitary tangential vector at $P$. Let $\xi$ satisfy
\[ \tau_P = - \frac{\nabla F^{-1} (P)^t \xi}{(1 + |\nabla \phi (p')|^2)^{1/2}}.\]
Since 
\[ \nu_P = \frac{\nabla F^{-1} (P)^t e_d}{(1 + |\nabla \phi (p')|^2)^{1/2}}\]
we know that
$\xi\cdot A_\gamma(0)\xi=e_d\cdot A_\gamma(0)e_d$, $\xi\cdot 
A_\gamma(0)e_d=0$ and $\xi = (\xi', \xi_d)$ satisfies $\xi' \neq 0$. In this case, \eqref{id:DEVwithNOISE}
\begin{align*}
\mathcal{N}_\gamma (f_{t^2}, \overline{f_{t^2}}/\gamma) - \int_{\partial D} 
\Lambda f_{t^2} \,\overline{f_{t^2}} \, &= \frac{ \partial_{\nu_P} \gamma(P) + i\tau_P \cdot \nabla\gamma(P)}{\gamma(P)} \\
&\quad + \sum_{\alpha \in \N^2} (f_{t^2}|e_{\alpha_1}) (\overline{f_{t^2}}/ 
\gamma|e_{\alpha_2}) X_\alpha + \mathcal{O} (t^{-1}).
\end{align*}
The proof of the theorem ends just taking average in the interval $(T_N, 2T_N)$ for every 
term of the previous asymptotic identity and using \eqref{lim:averaging}.
\end{proof}

\begin{proof}[Proof of the theorem \ref{th:stability_part_gamma}]
Noting that
\[\mathbb{E} \bigg|\frac{1}{T_N}\int_{T_N}^{2T_N} \sum_{\alpha \in \N^2} (f_{t^2}|e_{\alpha_1}) 
(\overline{f_{t^2}}/ \gamma|e_{\alpha_2}) X_\alpha \, dt \bigg|^2 \leq \frac{C}{N^{2 + \theta}}\]
with $C$ as in the lemma \ref{lem:filtering},
we apply the second part of the lemma \ref{lem:convergence} for the sequence of random variables
\[\Big\{ \frac{1}{T_N}\int_{T_N}^{2T_N} \sum_{\alpha \in \N^2} (f_{t^2}|e_{\alpha_1}) 
(\overline{f_{t^2}}/ \gamma|e_{\alpha_2}) X_\alpha \, dt : N \in \N\setminus\{ 0 \} \Big\}\]
and $\lambda_N = N^{-\theta}$. Thus,
we have that
\[\mathbb{P} \Big\{ \big| \frac{1}{T_N}\int_{T_N}^{2T_N} \sum_{\alpha \in \N^2} (f_{t^2}|e_{\alpha_1}) 
(\overline{f_{t^2}}/ \gamma|e_{\alpha_2}) X_\alpha \, dt \big| \leq N^{-\theta} \Big\} \geq 1 - \epsilon\]
for all $N \geq N_0$. According to \ref{rem:choice_n0}, it is enough to choose $N_0$ satisfying
\[\sum_{N=N_0}^\infty \frac{C}{N^{2 - \theta}} < \epsilon \]
with $C$ as in the lemma \ref{lem:filtering}, which holds whenever
\[(N_0 - 1)^{1 - \theta} > \frac{C}{1 - \theta} \epsilon^{-1}.\]
Since
\begin{align*}
\Big\{ \big| \frac{1}{T_N}\int_{T_N}^{2T_N} \sum_{\alpha \in \N^2} &(f_{t^2}|e_{\alpha_1}) 
(\overline{f_{t^2}}/ \gamma|e_{\alpha_2}) X_\alpha \, dt \big| \leq N^{-\theta} \Big\} \\
& \subset \Big\{ \big| Y_N - \frac{ \partial_{\nu_P} \gamma(P) + i\tau_P \cdot \nabla\gamma(P)}{\gamma(P)} \big| \leq C N^{-\theta} \Big\},
\end{align*}
we can conclude the inequality stated in the theorem.
\end{proof}

\begin{acknowledgements} The authors are partially supported by BERC 2014-2017 and the MINECO grant BCAM Severo Ochoa SEV-2013-0323. PC is also supported by the MINECO project MTM2015-69992-R, and would like to thank Ikerbasque - Basque Foundation for Science for their support and encouragement. AG is also supported by the MINECO project MTM2014-53145-P. Finally, we would like to thank the comments and recommendations of the anonymous referees, and also the careful reading of our manuscript.
\end{acknowledgements}


\bibliography{references}{}
\bibliographystyle{plain}

\end{document}